\documentclass[oneside,a4paper,11pt]{amsart} % Temporarily "oneside" to have todonotes always on the right
\usepackage{amssymb,verbatim,mathabx}
\usepackage{bbm}

\usepackage[T1]{fontenc}
\usepackage[english]{babel}

\usepackage[textsize=footnotesize,textwidth=20ex,colorinlistoftodos]{todonotes}

\usepackage{hyperref}
\hypersetup{
    colorlinks=true,
    linkcolor=blue,
    filecolor=magenta, 
    citecolor=red,     
    urlcolor=cyan,
}

\usepackage[capitalize]{cleveref}
    \crefname{enumi}{}{}
    \Crefname{enumi}{Item}{Items}
    \crefname{equation}{}{}
    \Crefname{equation}{Equation}{Equations}

\newtheorem{proposition}{Proposition}[section]
\newtheorem{lemma}[proposition]{Lemma}
\newtheorem{corollary}[proposition]{Corollary}
\newtheorem{theorem}[proposition]{Theorem}

\theoremstyle{definition}

\newtheorem{remark}[proposition]{Remark}

\newcommand{\Rad}{{\rm Rad}}

\begin{document}
\title[On orthogonal graphs and their automorphisms]{On orthogonal graphs and their automorphisms}
\author{Hans Cuypers}

\begin{abstract}
We provide a characterization of the  connected subgraphs of the graphs with vertex set the non-isotropic points in a quadratic space $(V,Q)$, 
two points adjacent if and only if they span a tangent line.
Here $(V,Q)$ is a   quadratic space $V$ over a finite field $\mathbb{F}_q$ of order $q$, where $q>3$ is odd,
equipped with a non-degenerate quadratic form $Q$.
The local structure of the graph (i.e. the graph induced on the neighbors of a point) determines the structure of the full graph. 
This characterization helps to determine the automorphism group of these graphs.
\end{abstract}

\maketitle

\section{Introduction}
\label{sect:intro}

Let $\mathbb{F}_q$ be a finite field of order $q$ with $q$ odd.
By $(V,Q)$ we denote the quadratic space consisting of a vector space $V$ over $\mathbb{F}_q$ equipped with a quadratic form $Q$,
which we assume to be non-degenerate.  
Non-degenerate means the only vector  $v$  in $V$ which does satisfy that $Q(v)=0$ and for all $u\in V$ with  
$Q(u)=0$ that also $Q(v+u)=0$ is the vector $v=0$. 

An \emph{isotropic} subspace $W$ of $V$ is a linear subspace $W$ of $V$ with $Q_{|W}=0$.
The \emph{polar space} of $(V,Q)$ is the geometry of all isotropic subspaces of $V$.
It has \emph{rank} equal to the dimension of the largest isotropic subspaces of $V$. (Here we assume the rank to be in $\mathbb{N}\cup\{\infty\}$.)
Since all isotropic spaces $W$ of $V$ of dimension at least $2$ are determined by their isotropic lines,
we also can, if the rank is at least $2$, consider this polar space to be the space of all isotropic points and lines of $V$.
(Here we use the projective language, a point is a $1$-space, a line a $2$-space of $V$.)

The graphs and geometries on the \emph{non-isotropic} points are a bit more complicated. 
We can  consider the  graphs with as vertices the points $\langle v\rangle$ where $v\in V\setminus \{0\}$ with $Q(v)\neq 0$ and two such points $\langle v\rangle$ and 
$\langle w\rangle$ adjacent if and only if $\langle v,w\rangle$ is a tangent line,
i.e.,  $\langle v,w\rangle$ contains a unique point $\langle u\rangle$ with $Q(u)=0$ but $u\neq 0$.
Such graph is not connected, it consists of at least two components. We can fix such a connected part and denote it by $\Gamma_Q$. See \cref{sec:orthogonal}.

We characterize the graph $\Gamma_Q$ by its local structure.

Let $\Gamma$ and $\Delta$ be a graph. Then we say $\Gamma$ is \emph{locally isomorphic} to $\Delta$ if for each vertex $v$ of $\Gamma$ there is a vertex $v'$ of $\Delta$ such that
$\Gamma_v$ and $\Delta_{v'}$ are isomorphic. Here, for any graph $\Delta$ with vertex $v$ the graph  $\Delta_v$ is the subgraph
of $\Delta$ on the vertices which are equal or adjacent to $v$.

Our main result is the following local characterization of the graphs $\Gamma_Q$:

\begin{theorem}\label{main}
Suppose $(V,Q)$ is a non-degenerate quadratic space over the field $\mathbb{F}_q$ with $q>3$ odd. 
Fix a graph $\Gamma_Q$ and assume the rank of the orthogonal polar space of $v^\perp$ is at least $2$ for all $v\in \Gamma_Q$.

Let $\Gamma$ be a  connected graph which is locally isomorphic to $\Gamma_Q$,  then $\Gamma$
is isomorphic to $\Gamma_Q$.
\end{theorem}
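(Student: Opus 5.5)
The plan is the standard route for local recognition of geometric graphs: reconstruct from $\Gamma$ a geometry isomorphic to the projective space of $V$ (or at least its non-isotropic part together with the tangent lines), and then show that the universal cover of $\Gamma_Q$ is $\Gamma_Q$ itself. Concretely, I would first analyse the local graph $(\Gamma_Q)_v$ for a vertex $v=\langle x\rangle$. A neighbour $\langle y\rangle$ of $v$ spans with $v$ a tangent line $\langle x,y\rangle$, which contains a unique isotropic point $p$ in $x^\perp$'s "tangent" position: writing $y = \lambda x + u$, the point $p$ is the isotropic point on the line through $x$. Thus the neighbours of $v$ are partitioned by the isotropic points $p$ with $p\not\perp x$. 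Each class consists of the $q-1$ non-isotropic points of the tangent line $\langle x,p\rangle$ other than $v$, intersected with the chosen component. I would show that these classes are recognisable graph-theoretically inside $\Gamma_v$. For instance, the tangent lines through $v$ should be the maximal cliques of a given size containing $v$, or should be determined by common-neighbour counts (here $q>3$ is used to separate them from other cliques). Hence "lines" of $\Gamma$, i.e. tangent lines, are definable locally.

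Next I would use these local lines to build a geometry $\mathcal{G}(\Gamma)$. Its points are the vertices together with equivalence classes of lines, the latter playing the role of the isotropic points. The key observation is that in $\Gamma_v$ the set of tangent lines through $v$, with the induced adjacency between them, carries the structure of the polar space of $x^\perp$ (projected from $x$). The rank $\ge 2$ hypothesis makes this polar space have lines, so it is connected and rigid. Using this, I would show that two tangent lines $L,M$ of $\Gamma$ "meet in the same isotropic point" via a local relation that is consistent across overlapping neighbourhoods. This recovers isotropic points as classes of tangent lines, and reconstructs the polar space of $(V,Q)$ together with the non-isotropic points as hyperplane-like objects: each vertex $v$ determines the set of isotropic points not on $v^\perp$.

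Finally I would identify $\mathcal{G}(\Gamma)$ with the geometry of $(V,Q)$. The reconstructed polar space is locally (at each isotropic point's residue) a non-degenerate polar space of rank $\ge 2$ over $\mathbb{F}_q$ coming from $V$. Connectedness of $\Gamma$ gives connectedness of this polar space, so by Tits' classification of polar spaces (or Buekenhout–Shult) it is the polar space of some non-degenerate quadratic space of the same dimension and Witt index, hence of $(V,Q)$. Alternatively, one can argue via simple connectivity: show every cycle of $\Gamma_Q$ decomposes into triangles, e.g. cycles of length $\le 5$ inside tangent-plane configurations. Then the covering map $\Gamma\to\Gamma_Q$, built by extending local isomorphisms along paths, is an isomorphism. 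I expect to pursue the covering argument. Define a map from the universal cover of the clique complex of $\Gamma_Q$. Local isomorphism with $\Gamma_Q$ lets the isomorphisms $\Gamma_v\cong(\Gamma_Q)_{v'}$ be glued along edges, provided each such local isomorphism is induced by an element of the orthogonal group. This requires $\mathrm{Aut}((\Gamma_Q)_v)$ to act as expected, which is again where the local reconstruction of tangent lines and the polar space of $v^\perp$ enters.

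The main obstacle will be proving that the triangles (and possibly a bounded family of short cycles) generate the cycle space of $\Gamma_Q$, i.e. that $\Gamma_Q$ is simply connected with respect to triangles. I would attack it by fixing a base vertex $v$ and showing by induction on distance that any cycle can be homotoped into $\Gamma_v$. The inductive step uses that for two vertices at distance $2$, their common neighbours form a connected subgraph. That connectedness should follow from the rank $\ge 2$ hypothesis and $q>3$, by analysing common neighbours via the isotropic points of the plane or $3$-space they span. A secondary difficulty is ensuring that the local isomorphisms can be chosen compatibly (a rigidity statement for $(\Gamma_Q)_v$), and I would handle that with the polar-space description of $\Gamma_v$ from the first step.
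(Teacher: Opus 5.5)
There is a genuine gap. The decisive step of any local recognition argument is never carried out: for adjacent $v,w$, you must show that the structure read off in $\Gamma_v$ and the structure read off in $\Gamma_w$ agree on $\Gamma_v\cap\Gamma_w$. Local isomorphism only gives abstract isomorphisms $\Gamma_v\cong(\Gamma_Q)_{v'}$ and $\Gamma_w\cong(\Gamma_Q)_{w'}$, with no relation between them. So gluing them ``provided each local isomorphism is induced by an element of the orthogonal group'' assumes what has to be proved.

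Knowing $\mathrm{Aut}((\Gamma_Q)_{v'})$ does not help until you know that the partial map on the overlap extends. That partial map is one side's isomorphism followed by the inverse of the other's, and extending it requires the lines and planes inside the overlap to be intrinsically defined. For the same reason, triangle-simple-connectedness of $\Gamma_Q$ says nothing about an arbitrary connected locally-$\Gamma_Q$ graph $\Gamma$ until compatible local isomorphisms exist. Your proof of simple connectedness is itself only sketched: connected $\mu$-graphs dispose of quadrangles, not pentagons.

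The paper supplies exactly the missing consistency, in three steps:
\begin{enumerate}
\item It characterises the tangent lines on $v$ inside $\Gamma_v$ as the cliques $C\ni v$ with $|C|\ge 5$ such that every neighbour of $v$ outside $C$ is adjacent to $2$ or to all vertices of $C$. This is where $q>3$ is used.
\item It proves $T_{v,w}=T_{w,v}$ by describing such a $C$ as the intersection of the neighbourhoods of the vertices adjacent to all of $C$.
\item It proves that the affine planes $A_{v,\ell,m}$ seen from different points coincide.
\end{enumerate}
Only then is there a global geometry $(\mathcal{P}_\Gamma,\mathcal{L}_\Gamma,\mathcal{A}_\Gamma)$ with affine planes that is locally a non-degenerate orthogonal polar space of rank $\ge 2$. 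The local-to-global step is then the cited classification (Cuypers; Cuypers--Pasini) of such geometries as standard quotients of affine polar spaces, and the ``$0$, $2$ or all'' property pins down which quotient occurs.

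Your local analysis also contains errors that matter here. The isotropic point $p$ on a tangent line through $v=\langle x\rangle$ lies in $x^\perp$: if $f(x,p)\neq 0$, then $Q(x+sp)=Q(x)+sf(x,p)$ has a root and $\langle x,p\rangle$ is hyperbolic. So the tangent lines on $v$ are indexed by the points of the polar space of $v^\perp$, not by isotropic points with $p\not\perp x$.

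Tangent lines also cannot be recognised as maximal cliques of a given size. Under the rank $\ge 2$ hypothesis, every tangent line on $v$ lies in a tangent plane, which is a clique of size $q^2$, so a tangent line is never a maximal clique. A criterion like the ``$2$ or all'' condition above is needed.

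Finally, in your first route, Tits' classification cannot be applied to a connected geometry merely because its residues are polar. The global one-or-all axiom for your classes of tangent lines is precisely the nontrivial content of the cited classification. Even after recovering a polar space, you would still have to show that $\Gamma$ is the particular quotient $\Gamma_Q$ of the corresponding affine polar space.
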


This local characterization is obtained by showing that $\Gamma$ does contain a set of lines and planes which correspond to the tangent lines and planes in $\Gamma_Q$.
The geometry thus obtained has been characterized in \cite{cuypers, cuyperspasini} and we can identify it with a tangent geometry of $(V,Q)$, which has $\Gamma_Q$ as collinearity graph .

Such local recognition results have already been studied in the nineties. Indeed, the graphs we consider are related to the ones studied in \cite{cohen}. However, recent results in for example \cite{Ihringer} or \cite{Mitrovic}
show new interest in local recognition of graphs.

\bigskip

Until now, the automorphism group of the graph $\Gamma_Q$ was unknown, see  \cite{srg}.
Of course it contains the group $P\Gamma O(V,Q)$, but it might be bigger.
Our  approach to a proof of \cref{main}, however, shows the automorphism group of $\Gamma_Q$ to be a subgroup of the automorphism group
of the polar space associated to $(V,Q)$. Our approach provides the following result.

\begin{theorem}\label{aut}
Suppose $(V,Q)$ is a non-degenerate quadratic space over a field $\mathbb{F}_q$ with $q>3$ odd.
Fix a connected graph $\Gamma_Q$ and assume the rank of the orthogonal polar space on $v^\perp$ is at least $2$ for all $v\in \Gamma_Q$.

Then the  automorphism group of the graph $\Gamma_Q$ is isomorphic to $P\Gamma O(V,Q)$.
\end{theorem}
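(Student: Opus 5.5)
The plan is to show that every automorphism of $\Gamma_Q$ is induced by a semilinear map preserving $Q$ up to a scalar. The route is to reconstruct, intrinsically from the graph, the tangent lines and tangent planes of $(V,Q)$, and from these the polar space itself. Clearly $P\Gamma O(V,Q)$ (meaning the semisimilarities modulo scalars that stabilize the chosen component) acts on $\Gamma_Q$. This action is faithful: a projective semisimilarity fixing every point of a component fixes a spanning set of non-isotropic points in general position, hence is trivial. So it suffices to prove that every graph automorphism comes from this group.

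\textbf{Step 1: graph-theoretic description of tangent lines.} First I identify, purely in terms of adjacency, the set of points of $\Gamma_Q$ on a tangent line $\langle v,w\rangle$ through adjacent vertices $v,w$. A tangent line $\ell$ with isotropic point $p$ contains $q$ non-isotropic points, all with the same square class of $Q$ up to the component. Any two of them are adjacent, so the line is a clique. I would characterize $\ell\cap\Gamma_Q$ as, for instance, the set of common neighbours of $v$ and $w$ that are adjacent to all vertices in $\{v,w\}^{\perp\perp}$ of the graph, or more concretely as the intersection of $\Gamma_v\cap\Gamma_w$ with the neighbourhoods of all common neighbours. The verification is a computation in the local graph $\Gamma_v$. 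Its vertices are the non-isotropic points in $v^\perp$ together with the points $\langle v+u\rangle$, $u\in v^\perp$ with $Q(u)=0$ (suitably scaled). Here the hypotheses $q>3$ and rank at least $2$ in $v^\perp$ guarantee enough common neighbours to separate points on $\ell$ from points off $\ell$. Tangent planes (planes $\pi$ with $\pi\cap\pi^\perp$ a point $p$ isotropic, so that all lines through $p$ are tangent) are then recognized as maximal sets closed under taking these lines.

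\textbf{Step 2: recovering the polar space.} Each tangent line $\ell$ determines its unique isotropic point $p$. Two tangent lines have the same isotropic point exactly when they lie in a common tangent plane through $p$, or are connected through a chain of such planes, so $p$ is recovered as an equivalence class of tangent lines. Isotropic lines are recovered as sets of such classes obtained from suitable configurations of tangent planes. This is the tangent geometry that the paper identifies via \cite{cuypers, cuyperspasini}. Any graph automorphism preserves the graph-defined lines and planes, so it induces an automorphism of this geometry and hence of the orthogonal polar space of $(V,Q)$. Since the rank is at least $2$, the polar space is nondegenerate of rank $\geq 2$ (rank $\geq 3$ in $V$, as $v^\perp$ has rank $\geq 2$). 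By Tits' classification (or directly Chow-type theorems for orthogonal polar spaces) its automorphism group is $P\Gamma O(V,Q)$ in the semisimilarity sense.

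\textbf{Step 3: the induced element acts correctly, and faithfulness.} Let $g\in P\Gamma O(V,Q)$ be the element induced by the automorphism $\varphi$. I check that $g$ agrees with $\varphi$ on non-isotropic points. A non-isotropic point $x$ is determined by the set of isotropic points of $x^\perp$, and likewise by the tangent lines through $x$. Through $x$ these are exactly the graph-defined lines on $x$. Hence $\varphi(x)=g(x)$. Moreover, the kernel of $\varphi\mapsto g$ is trivial, since an automorphism fixing all isotropic points fixes all tangent lines and so all vertices. Combining this with the faithfulness observed at the start gives the claimed isomorphism with $P\Gamma O(V,Q)$ (the stabilizer of the component, which is the whole group when semisimilarities of non-square multiplier swap nothing relevant; this point needs care).

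\textbf{Main obstacle.} The main obstacle is Step 1: proving that the intrinsic graph definition of tangent lines is exactly right. This requires counting common neighbours of two adjacent vertices $v,w$ and comparing according to whether a third vertex lies on $\langle v,w\rangle$ or not. I would do this in coordinates, with $v=e$ and $w=e+p$ where $p$ is isotropic in $e^\perp$, and split cases by the isometry type of $\langle v,w,x\rangle$. The restriction $q>3$ should be exactly what makes the counts differ.
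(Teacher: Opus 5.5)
\textbf{Overall.} Your architecture matches the paper's. You recover tangent lines and tangent planes from adjacency, pass via \cite{cuypers,cuyperspasini} to the polar space of $(V,Q)$ as parallel classes of tangent lines, and apply the automorphism theorem for polar spaces. Your Step 3 even makes explicit what the paper leaves implicit: a vertex $x$ is determined by the isotropic points of $x^\perp$, so the induced element of $P\Gamma O(V,Q)$ determines the graph automorphism; you also note the component-stabilizer caveat in even dimension. The gap is Step 1, which is the heart of the proof.

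\textbf{Your Step 1 characterization is false.} Let $\ell$ be the tangent line on adjacent $v,w$. Choose a tangent line $T$ on $v$ that is not collinear with $\ell$ in $\Pi_v$; it exists because $\Pi_v$ is non-degenerate. By \cref{polar}, $w$ is adjacent to exactly one vertex $t\in T\setminus\{v\}$, and $t$ is adjacent to no point of $\ell\setminus\{v,w\}$. In coordinates: take $p,p'$ isotropic in $v^\perp$ with $f(p,p')=1$; then $\langle v+ap\rangle$ and $\langle v+bp'\rangle$ ($a,b\neq 0$) are adjacent iff $ab=-4Q(v)$. So $t$ is a common neighbour of $v$ and $w$ that excludes every $z\in\ell\setminus\{v,w\}$. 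Consequently:
\begin{itemize}
\item $\{v,w\}^\perp\cap\{v,w\}^{\perp\perp}$ meets $\ell$ only in $\{v,w\}$;
\item your other formulation, $\{v,w\}^{\perp}\cap\{v,w\}^{\perp\perp\perp}$, is all of $\{v,w\}^\perp$, which contains every tangent plane through $\ell$.
\end{itemize}
Double perps recover $\ell$ only from three of its points, since a vertex adjacent to three points of a tangent line is adjacent to all of it. That is why the paper can write $C=C^{\perp\perp}$ for a line already found, but needs a different criterion to find the line in the first place.

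\textbf{The missing idea.} It is the paper's clique lemma: a clique $C\ni v$ with $|C|\ge 5$, such that every neighbour of $v$ outside $C$ is adjacent to exactly $2$ or to all vertices of $C$, is a tangent line. Tangent lines qualify by \cref{polar}. For the converse, work in $\Pi_v$:
\begin{itemize}
\item If $C$ has points on two tangent lines $T_{v,u},T_{v,w}$ collinear in $\Pi_v$, then $C$ lies in their tangent plane. Two vertices from \cref{hyperbolic}, each seeing a pair of parallel lines and from different parallel classes, then force $|C|\le 4$.
\item If the two lines are non-collinear, then $C\subseteq T_{v,u}\cup T_{v,w}$, and suitable neighbours of $v$ cut $C$ down to $\{v,u,w\}$.
\end{itemize}
So $q>3$ enters only by letting tangent lines, of size $q\ge 5$, clear a size threshold that no other such clique reaches. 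It does not enter through a difference in common-neighbour counts, as you anticipated.

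\textbf{Two smaller repairs.}
\begin{itemize}
\item Tangent planes are not ``maximal sets closed under these lines''. The whole vertex set is such a set. Even line-closed cliques are too big when $v^\perp$ has rank $\ge 3$: the anisotropic points of $\langle v\rangle+W$, with $W$ a totally isotropic plane of $v^\perp$, form one of size $q^3$. Define a tangent plane instead as the union of the $q+1$ lines on $v$ that form a line of $\Pi_v$. Equivalently, it is the plane generated by two tangent lines on $v$ whose union is a clique.
\item In Step 2 the relation must be ``parallel in a common tangent plane''. Intersecting tangent lines in a tangent plane have distinct isotropic points.
\end{itemize}
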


Indeed, we deduce that automorphisms of $\Gamma_Q$ map tangents of the graph  to tangents. This implies that the automorphism group of $\Gamma_Q$ is also an automorphism group of the polar space of $(V,Q)$. 
As the automorphism group of this polar space equals the group $P\Gamma O(V,Q)$, this proves $P\Gamma O(V,Q)$ to be the automorphism group of $\Gamma_Q$.

\section{Orthogonal graphs}
\label{sec:orthogonal}

Assume $V$ is a  vector space over the finite field $\mathbb{F}_q$ with $q$ elements, $q$ odd, equipped with a quadratic form $Q$.
The elements of dimension one and two of the projective geometry $\mathbb{P}(V)$ will be called \emph{points} and \emph{lines}, respectively.

By $f$ we denote the symmetric bilinear form on $V$ defined by
$$f(v,w)=Q(v+w)-Q(v)-Q(w)$$
for all $v,w\in V$.
Furthermore $\perp$ denotes \emph{orthogonality}, i.e. for subspaces $U$ and $W$ of $V$ we write $U\perp W$
if $f(u,w)=0$ for all $u\in U$ and $w\in W$.
If $U$ is a subset of points, $U^{\perp}$ denotes the subspace $\{v\in V\mid v\perp u$ for all $u\in U\}$.

The \emph{radical} of $f$ is $\Rad(f)=\{v\in V\mid f(v,w)=0$ for all $w\in V\}$ and the \emph{radical} of $Q$ is
$\Rad(Q)=\{v\in \Rad(f)\mid Q(v)=0\}$. 
We call $Q$ \emph{non-degenerate} if and only if $\Rad(Q)=\{0\}$.

A subspace $U\leq V$ is called \emph{isotropic} if $Q_{|U}=0$.
The \emph{rank} of $Q$ is the dimension of a maximal isotropic subspace.
We assume that rank to be a non-negative integer or $\infty$.

A subspace $U$ is called \emph{anisotropic}, if $Q(u)=0$ for $u\in U$ implies $u=0$.

These notions carry over to the projective space $\mathbb{P}(V)$.
So, an isotropic point is a $1$-space of $V$ which is isotropic.
A point $p\in \mathbb{P}(V)$ is called \emph{anisotropic} (or \emph{non-isotropic}) if and only if $p=\langle v\rangle$ for some $v$ in $V$ with 
$Q(v)\neq 0$.

For lines $\ell$ of $\mathbb{P}(V)$ we can distinguish the following four possibilities:

\begin{enumerate}
\item $\ell$ is \emph{isotropic}, all its points are isotropic;
\item $\ell$ is \emph{hyperbolic}, two of its points are isotropic, all others anisotropic;
\item $\ell$ is \emph{elliptic}, all its points are anisotropic;
\item $\ell$ is \emph{tangent}, one point is isotropic, all others are anisotropic.
\end{enumerate}

Notice that for an anisotropic point $p=\langle v\rangle$ we can have that $Q(v)$ is a square or a non-square in $\mathbb{F}_q$ different from $0$.
So, we can distinguish between points of square-type or non-square type.
Moreover, on a tangent line, all points $p$ are of square type or all are of non-square type.
In particular, the points in the graph $\Gamma_Q$, i.e., a connected  graph on the anisotropic points, where two points are adjacent if 
and only if they span a tangent line, are all of square type or all of  non-square type.

If we replace our form $Q$ by $\alpha Q$, where $\alpha$ is a non-square, we switch the type of the points.
So, square or non-square type is not a really good name, it depends on $Q$.

Now consider the $3$-dimensional case where the form $Q$ is non-degenerate.
There are $q^2+q+1$ points, and we  find that $q+1$ of them are isotropic.
Each pair of isotropic points determines a unique hyperbolic line $H$ and the space $H^\perp$ is a anisotropic point.
Hence there are $\frac{(q+1)q}{2}$ anisotropic points $p$ with $p^\perp$ a hyperbolic line.
We call them the $+$-points.

Let $r$ be the point in the radical of $Q$.
As each isotropic point is on a unique tangent line passing through $r$,
we find $q+1$ tangent lines, and $(q-1)(q+1)=q^2-1$ anisotropic lines outside the radical. 
Each anisotropic point different from $r$ is on a unique hyperbolic line.
The $q$ other lines on such anisotropic line are either hyperbolic or elliptic.
As each hyperbolic line contains two isotropic points, we find $\frac{q}{2}$ hyperbolic lines
on such anisotropic point and $\frac{q}{2}$ elliptic lines.
So, we find that there are $\frac{q(q-1)}{2}$ elliptic lines, and hence 
$\frac{q(q-1)}{2}$ anisotropic points $p$ with $p^\perp$ being elliptic.
These are the $-$-points.
This accounts for all $\frac{(q+1)q}{2}+ \frac{(q-1)q}{2}+q+1=q^2+q+1$ points of the projective plane.
All $+$-points are of square type and all $-$-points of non-square type (or vice versa).

We can generalize this to arbitrary dimension $n\in \mathbb{N}$ of $V$.

If $n$ is odd, there is only one non-degenerate form $Q$ (up to isomorphisms). It is called of $0$ type.
For even $n=2m$, there are two forms $Q$.
We call the form $Q$ of $+$-type if maximal isotropic subspaces have dimension $m$,
and $-$-type if they have dimension $m-1$.

A point $p$ is called of $\epsilon$-type if the hyperplane $p^\perp$ is of $\epsilon$-type, where $\epsilon\in \{+,-,0\}$.

In odd dimension $n=2m+1$, we can split $\mathbb{P}(V)$ into the graphs  $N^{+}O(V,Q)$ and $N^{-}O(V,Q)$, the sets of points $p$ where $p^\perp$ is
of $+$-type and of $-$-type, where two points
are adjacent if and only if they are on a tangent. 
For odd $n$, these two subsets  are the two connected components of the graph on all the anisotropic points.
In case of $n=2m$ being even, all points are of $0$-type, but the graph we are considering has (at least) two isomorphic components denoted by $NO^\pm(V,Q)$
on the points $\langle v\rangle$ with $Q(v)$ being a square or a non-square.
Notice for $n=2$, there are no tangents, so the connected graphs are  just the graphs on a single point.
For $n\geq 4$, these two sets are precisely the two components of the graph on the anisotropic points in $(V,Q)$, where two points are adjacent if and only if they span a tangent.

The condition that for each vertex $v\in \Gamma_Q$ the space $v^\perp$ is of rank at least $2$ is equivalent to $\dim(V)\geq 5$ for
vertices $v$ in $N^+O(V,Q)$, and $\dim(V)\geq 6$ for vertices $v\in NO^\pm(V,Q)$ or $v\in N^-O(V,Q)$.

\section{Tangent geometries}

Consider the situation of the previous section: $V$  is a  vector space over the finite field $\mathbb{F}_q$
where $2\not\mid q$. By $(V,Q)$ we denote the quadratic space on $V$ induced by the non-degenerate quadratic form $Q$.

Then by $(\mathcal{P},\mathcal{L})$ we denote the partial linear space with as point set $\mathcal{P}$ 
the set of anisotropic points
of $\mathbb{P}(V)$  and as lines in $\mathcal{L}$ the tangent lines, or more precisely the sets of anisotropic points contained 
in some tangent line.
As we have seen in the previous section, these geometries are not  connected.
Both $(\mathcal{P},\mathcal{L})$ as well as any connected component  of  $(\mathcal{P},\mathcal{L})$ is called a \emph{tangent geometry} of $(V,Q)$.

A \emph{tangent plane} is the set of anisotropic points in a plane of $\mathbb{P}(V)$ 
on which the form $Q$ has a $2$-dimensional radical. So, the tangent plane consists of all the points  of the plane
outside the radical. Equipped  with its tangent lines, a tangent plane is an affine plane.
Denote $\mathcal{A}$ the set of tangent planes of $(V,Q)$.

We consider the geometry $(\mathcal{P},\mathcal{L},\mathcal{A})$.
If $p\in \mathcal{P}$ is a point, then by $\mathcal{L}_p$ and $\mathcal{A}_p$ we denote the set of lines and tangent planes on $p$, respectively.

\begin{proposition}\label{polar}
Consider the geometry $(\mathcal{P},\mathcal{L},\mathcal{A})$.
Let $p\in \mathcal{P}$ be a point.
Then $(\mathcal{L}_p,\mathcal{A}_p)$ is isomorphic to the polar space of isotropic  points and lines in $p^{\perp}$.

If $\ell\in \mathcal{L}$ is a line on $p$, and $q\in \mathcal{P}$ a point collinear to $p$, then $q$ is collinear with 
 $2$ or all points of $\ell$. 
\end{proposition}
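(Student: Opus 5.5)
The plan is to make the map from $\mathcal{L}_p$ to isotropic points of $p^\perp$ explicit, and then read off both claims by computing with the form $f$. Write $p=\langle v\rangle$ with $Q(v)\neq 0$.

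For a tangent line $\ell$ on $p$, let $\langle u\rangle$ be its unique isotropic point. Since the restriction of $Q$ to $\ell$ is degenerate with $Q(v)\neq 0$, the radical of $f$ restricted to $\ell$ is exactly $\langle u\rangle$, so $f(u,v)=0$. Hence $\langle u\rangle$ is an isotropic point of $p^\perp$.

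Conversely, if $\langle u\rangle\le p^\perp$ is isotropic, then $Q(\lambda v+\mu u)=\lambda^2Q(v)$. So $\langle v,u\rangle$ is a tangent line on $p$. This gives a bijection $\ell\mapsto \ell\cap p^\perp$ from $\mathcal{L}_p$ onto the isotropic points of $p^\perp$.

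Next consider tangent planes on $p$. A plane $\pi\ni p$ with $2$-dimensional radical $R$ has $R$ totally isotropic, and $Q$ does not vanish at $v\notin R$. Since $R\perp v$, we get $R\le p^\perp$, and $R$ is an isotropic line of $p^\perp$. Conversely, for an isotropic line $R\le p^\perp$, the plane $\langle v\rangle+R$ satisfies $Q(\lambda v+r)=\lambda^2Q(v)$ and $f(v,r)=0$, so its radical is $R$ and it is a tangent plane. Incidence is preserved: $\ell\subseteq\pi$ if and only if $\ell\cap p^\perp\subseteq R$, because $\ell=\langle v,u\rangle$ and $\pi=\langle v\rangle\oplus R$. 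This proves the first claim.

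For the second claim, let $\ell=\langle v,u\rangle$ with $u$ as above, and let $q=\langle w\rangle$ be collinear with $p$. Then $\langle v,w\rangle$ contains an isotropic point $\langle u'\rangle$ with $u'\perp v$. Write $w=v+u'$; after scaling we may take $Q(w)=Q(v)$, and if $q=p$ the claim is trivial. The points of $\ell$ other than $\langle u\rangle$ are $x_t=\langle v+tu\rangle$, all with $Q(x_t)=Q(v)$.

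The line $\langle w,x_t\rangle$ is tangent if and only if its Gram matrix is singular, that is, $Q(v)^2\cdot 4=f(w,v+tu)^2$. Equivalently, since $f(v,v)=2Q(v)$, we need $f(w,v+tu)=\pm 2Q(v)$. Using $f(v,u)=f(v,u')=0$, we compute $f(w,v+tu)=2Q(v)+t f(u',u)$.

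If $f(u',u)=0$, every $x_t$ satisfies the condition, so $q$ is collinear with all points of $\ell$. One should also check that the line is not totally isotropic or otherwise degenerate in a bad way; this is automatic, since it contains the anisotropic point $q$.

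If $f(u',u)\neq 0$, the solutions are $t=0$ (giving $x_0=p$) and $t=-4Q(v)/f(u',u)$. The second is distinct from $t=0$ because $q$ is odd. So $q$ is collinear with exactly $2$ points of $\ell$.

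One must also exclude the degenerate case where $w$ is a scalar multiple of $v+tu$; then $q\in\ell$ and $q$ is collinear with all of $\ell$. The main obstacle is bookkeeping: handling the normalization $Q(w)=Q(v)$ and making sure that a singular Gram matrix for two anisotropic points with equal $Q$-values really forces a tangent line rather than an isotropic one.
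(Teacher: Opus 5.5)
Your argument is correct. For the first claim you follow the paper's route: tangent lines on $p$ correspond to isotropic points of $p^\perp$ via $\ell\mapsto \ell\cap p^\perp$, and tangent planes on $p$ correspond to isotropic lines of $p^\perp$ via their radical. You simply write out the bijectivity and incidence checks. The one step you assert without proof is that a tangent line $\langle v,u\rangle$ has $f(u,v)=0$. This holds because otherwise $Q(\lambda v+\mu u)=\lambda(\lambda Q(v)+\mu f(v,u))$ would vanish at a second point of the line.

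For the second claim you take a genuinely different route.

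\emph{The paper's route.} It works synthetically in the plane $\langle q,\ell\rangle$. If $q$ is not collinear with all of $\ell$, this plane is non-degenerate and $q^\perp$ meets it in a hyperbolic line. So exactly two tangents on $q$ lie in that plane, and both meet $\ell$.

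\emph{Your route.} You normalize $w=v+u'$ and reduce collinearity of $q$ with $\langle v+tu\rangle$ to the Gram condition $2Q(v)+t\,f(u',u)=\pm 2Q(v)$.

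\emph{What each buys.} The paper's version is shorter and coordinate-free. However, it leaves implicit why $\langle q,\ell\rangle$ must be non-degenerate, and why the second tangent meets $\ell$ in an anisotropic point other than $p$. Your computation handles both automatically. The second root $t=-4Q(v)/f(u',u)$ is nonzero because the characteristic is odd, and every $\langle v+tu\rangle$ is anisotropic. Your computation also gives a sharper statement: $q$ is collinear with all of $\ell$ exactly when $f(u,u')=0$. Equivalently, $\ell$ and the line $\langle p,q\rangle$ are equal or collinear points of the polar space $(\mathcal{L}_p,\mathcal{A}_p)$. This links the two halves of the proposition, and it is precisely the dichotomy exploited later, e.g.\ in Corollary~\ref{hyperbolic}.
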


\begin{proof}
Let $p\in \mathbb{P}(V)$ be anisotropic with $p\in \mathcal{P}$. Then a line $\ell$ inside $\mathcal{L}_p$ meets $p^\perp$ in a unique isotropic point, which we denote by
$x_p$.
On the other hand, if $x$ is an isotropic point of $p^\perp$, then the line $\langle p,x\rangle$ is a tangent line. So, $x=x_p$.

If $x_p$ and $y_p$ are two isotropic points inside $p^\perp$, then the line $\langle x_p,y_p\rangle$
is isotropic, if and only if for each $z\in\langle x_p,y_p\rangle$ we find $Q(z)=0$.
But that implies that $\langle x_p,y_p\rangle$ is isotropic, precisely when  $\langle p, x_p,y_p\rangle$
is a $3$-space of $\mathbb{P}(V)$  on which $Q$ has a $2$-dimensional radical, namely $\langle x_p,y_p\rangle$.
The space $\langle p, x_p,y_p\rangle$ meets the set of anisotropic points in a tangent plane.
Hence  $(\mathcal{L}_p,\mathcal{A}_p)$ is isomorphic to the polar space of isotropic  points and lines in $p^{\perp}$.

Now assume that $\ell\in \mathcal{L}_p$, and fix a point  $q\in \mathcal{P}$ different from but collinear to $p$. 
Assume that $q$ is not collinear with all points of $\ell$.
Then the $3$-space  $\langle q,\ell\rangle$ of $\mathbb{P}(V)$ is non-degenerate with respect to $Q$.
It consists of two components  with  $\frac{q(q-1)}{2}$ or  $\frac{q(q+1)}{2}$ points.
In both cases we find $q$ to be collinear with $2$ points of $\ell$.
Indeed, inside this $3$-space, we find that $q^\perp$ is a hyperbolic line with exactly two isotropic points.
The two tangent lines on $q$ inside  this $3$-space both meet $\ell$.
\end{proof}

An \emph{affine polar space} is a polar space from which a geometric hyperplane is removed.
Here a \emph{geometric hyperplane} is a proper subset $H$ of the point set of the polar space which meets each line in one or all points.
The geometry $(\mathcal{P},\mathcal{L},\mathcal{A})$ satisfying the conditions of \cref{polar}
can be obtained from an affine polar space in the following way.

Consider a non-degenerate quadratic space  $(\hat{V},\hat{Q})$ which contains $V$ as a hyperplane with $\hat{Q}_{|V}=Q$.
Let $\hat{P}$ be the set of points in the polar space of $(\hat{V},\hat{Q})$. Then $V$ meets $\hat{P}$ in the geometric hyperplane $\hat{P}\cap \mathbb{P}(V)$.
The affine polar space  obtained by removing the isotropic points of $V$ from the isotropic points in $\hat{V}$ is denoted by 
${AP}(\hat{V},V)$.

Now there is a unique point $r\in \mathbb{P}(\hat{V})$ with $r^\perp$ being the hyperplane $V$. This  point $r$ is anisotropic as $Q$ is non-degenerate.
For any isotropic point $\hat{p}\in \mathbb{P}(\hat{V})$ which is not in $V$ we find that the line $\langle \hat{p},r\rangle$ is  hyperbolic  
and contains a unique point $p$ in $\mathbb{P}(V)$ with $r\perp p$. This point $p$ is also anisotropic.
The map from the affine polar space 
${AP}(\hat{V},V)$ to one of the connected graphs of anisotropic points $\Gamma_Q$ which maps each isotropic point $\hat{p}$ in  ${AP}(\hat{V},V)$
to the anisotropic point $p$ of $\mathbb{P}(V)$
is called a \emph{standard map}.  The image, being the full graph $\Gamma_Q$, is called the standard quotient of the affine polar space.

So, the graph  $\Gamma_Q$ and standard quotient $(\mathcal{P},\mathcal{L}, \mathcal{A})$ 
can be  obtained from ${AP}(\hat{V},V)$, but we can also go in the reverse direction.
Indeed, we can obtain the polar space of $(V,Q)$ from  $(\mathcal{P},\mathcal{L}, \mathcal{A})$.
To find this polar space, we can for each point isotropic point $p$ of $\mathbb{P}({V})$ take the set of those tangent lines in $\mathcal{L}$ which have this point as its missing point. 
The lines on a fix isotropic point $p$ from $V$ form a set of parallel lines, where parallel extends the relation of being parallel in a plane in $\mathcal{A}$.
These parallel classes can  then be identified with  the points of the polar space. 
As a line of this polar space, we can take unions of these parallel classes present in a plane.
In this way we recover the polar space on $V$.
This construction is explained and used in \cite{cuypers,cuyperspasini}.
For more details, we therefore refer the reader to \cite{cuypers,cuyperspasini}.

\begin{corollary}\label{hyperbolic}
Suppose $A\in \mathcal{A}$ is an affine plane and $\ell, m$ two parallel lines in the plane $A$.
Then there is a point $x\in \mathcal{P}$ collinear with just the points of $\ell\cup m$ inside $A$. 
\end{corollary}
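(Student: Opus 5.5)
The plan is to work inside the ambient projective space $\mathbb{P}(V)$. Let $A$ be the set of anisotropic points of a $3$-space $\pi$ of $V$ on which $Q$ has a $2$-dimensional radical $R=\Rad(Q_{|\pi})$. So $R$ is an isotropic line, and $A$ consists of the points of $\pi$ off $R$. Two parallel lines $\ell,m$ of $A$ lie on tangent lines through the same isotropic point $x_0\in R$. Hence $\langle \ell,m\rangle$ is a $3$-space $\sigma$ meeting $\pi$ in the plane $\langle \ell, m\rangle = \langle x_0, p, p'\rangle$ with $p\in\ell$, $p'\in m$. The triangle is really the plane $\pi$ itself, and $\ell\cup m$ spans $\pi$. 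So the claim asks for a point $x\in\mathcal{P}$ whose tangent lines meet $\pi$ exactly in $\ell\cup m$.

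Key step: pick $x\in\mathcal P$ collinear with a point of $\ell$ but outside $\pi$, and compute $x^\perp\cap\pi$. A point $y\in A$ is collinear with $x$ precisely when $\langle x,y\rangle$ is tangent. This means the restriction of $Q$ to $\langle x,y\rangle$ has a $1$-dimensional radical, i.e. $f(x,y)^2=4Q(x)Q(y)$.

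Parametrize $\pi=\langle p\rangle\oplus R$ with $Q(p+r)=Q(p)=:c$ for $r\in R$. For $y=\lambda p+r$ the tangency condition becomes
\[
\bigl(\lambda f(x,p)+f(x,r)\bigr)^2=4\lambda^2 Q(x)c .
\]
Fixing $\lambda=1$, this reads $\bigl(f(x,p)+\phi(r)\bigr)^2=4Q(x)c$, where $\phi=f(x,\cdot)_{|R}$ is a linear functional on $R$. Since $x$ is collinear with a point of $A$, $4Q(x)c$ is a square $s^2$ with $s\neq 0$. The solutions are then $\phi(r)=-f(x,p)\pm s$. If $\phi\neq 0$, these are two parallel affine lines in the affine plane $A\cong \{p+r\}$, namely two lines of the parallel class with direction $\ker\phi$.

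So the construction is as follows. Choose the isotropic point $x_0$ with $\ker\phi=x_0$, which means $x\perp x_0$ but $x\not\perp R$. Then choose the constants so that the two lines are exactly $\ell$ and $m$. This gives $x$ collinear with exactly $\ell\cup m$ in $A$; the distinctness $s\neq0$ ensures two lines rather than one.

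Existence of such $x$: I would take $\hat v$ with the prescribed values $f(\hat v,p)$, $f(\hat v,r_1)=0$ on $x_0=\langle r_1\rangle$, $f(\hat v,r_2)=a$ on a complementary vector, and prescribed $Q(\hat v)$ of the correct square class. This is possible because $f$ is non-degenerate on $V$, so $\pi$ imposes independent linear conditions, and $Q$ can then be adjusted by adding a vector of $\pi^\perp$ of suitable norm. The main obstacle is this adjustment of $Q(x)$. I would handle it by choosing $x$ to be a point on a suitable tangent through a point of $\ell$, using \cref{polar}, instead of solving the equations directly. A simpler route: pick a point $q\in\ell$ and a tangent line $t$ on $q$ not in $\pi$ whose isotropic point $z$ satisfies $z\perp x_0$ but $z\not\perp R$. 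Such a $z$ exists in the polar space of $q^\perp$ since its rank is at least $2$. Any $x\in t\setminus\{q,z\}$ then satisfies the hypotheses, with $\phi\neq0$ and $x_0=\ker\phi$, and the computation above shows $x$ sees exactly two lines of that parallel class, one of them $\ell$. The remaining task is to adjust $x$ along $t$ (or choose $t$) so the second line is $m$. Since varying $x$ along $t$ moves the second line through the parallel class, which has $q$ members, that parameter suffices; this last adjustment is the delicate check.
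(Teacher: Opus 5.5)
Your final route is the paper's proof. You take a point of $\ell$ and a tangent $t$ through it whose isotropic point $z$ satisfies $z\perp x_0$ but $z\not\perp R$; in the paper this is a line $n$ collinear in $\Pi_p$ with $\ell$ but not with another line of $A$ on $p$. You note that each $x\in t\setminus\{p,z\}$ sees $\ell$ plus one more line of its parallel class, and then slide $x$ along $t$.

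The step you leave as ``the delicate check'' is also only asserted in the paper. Counting alone does not settle it: $q-1$ points of $t$ against $q-1$ remaining lines in the class gives nothing unless distinct points of $t$ give distinct second lines. Your own tangency formula closes this in one line.

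Take the base point $p$ of your parametrization on $\ell$ (your point $q$). Write $R=\langle r_1,r_2\rangle$ with $x_0=\langle r_1\rangle$, normalize $f(z,r_2)=1$, and put $x_\mu=\langle p+\mu z\rangle$ for $\mu\neq 0$. Since $z\perp p$, $z\perp r_1$ and $R\perp p$, you get $Q(p+\mu z)=c$ and $f(p+\mu z,\,p+\alpha r_1+\beta r_2)=2c+\mu\beta$. So $x_\mu$ is collinear with $\langle p+\alpha r_1+\beta r_2\rangle$ if and only if $(2c+\mu\beta)^2=4c^2$, that is, $\beta=0$ (the line $\ell$) or $\beta=-4c/\mu$. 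The lines parallel to $\ell$ are the sets $\beta=\beta_0$ with $\beta_0\in\mathbb{F}_q$. Since $\mu\mapsto -4c/\mu$ is a permutation of $\mathbb{F}_q^{*}$, each such line $m\neq\ell$ occurs for exactly one $\mu$, which finishes the argument.

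The obstacle you flagged in your direct construction is not real either. The correction of $Q$ comes from the cross term, not from the norm of the added vector. Suppose $f(\hat v,\cdot)_{|\pi}$ is prescribed with $f(\hat v,r_2)=a\neq 0$. Because $r_2\in R\subseteq\pi^\perp$, replacing $\hat v$ by $\hat v+tr_2$ leaves $f(\hat v,\cdot)_{|\pi}$ unchanged, while $Q(\hat v+tr_2)=Q(\hat v)+ta$ runs through all of $\mathbb{F}_q$. So you can impose the exact value of $Q(\hat v)$ that is needed; it fixes the spacing between the two lines, so its square class alone is not enough. This also gives a self-contained coordinate proof that never uses the polar space on $p^\perp$.
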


\begin{proof}
Let $p$ be a point of $\mathcal{P}$ inside $A$. The lines of $\mathcal{L}$ on $p$ form a polar space isomorphic to the space $p^\perp$.
This is a non-degenerate polar space. Consider a plane $A\in \mathcal{A}$ on $p$.
The lines on $p$ inside $A$ form an isotropic line in the polar space on the lines through $p$.
Fix two lines $\ell$ and $\ell'$ in $A$ on $p$. Then in this polar space  there is a line $n$ on $p$
collinear to $\ell$ but not $\ell'$, and hence there is a point $x'\in n\setminus\{p\}$ collinear with $p$ with 
$\ell\subseteq x^\perp$, but $\ell'\not\subseteq x^\perp$.
As this point $x'$ is collinear with no other line on $p$ inside $A$, we find that each $x'$ is collinear with $2q$ points.
But then these $2q$ points are on two parallel lines inside $A$, one being $\ell$.
Varying the point $x$ over the line through $p$ and $x'$, we find a point $x$ collinear to only the points from $\ell\cup m$ inside $A$.
\end{proof}

\section{Recovering the tangent geometry}
\label{sect:tangent}

As before, fix a  vector space $V$ over a field $\mathbb{F}_q$ with $2\not \mid q>3$, and $Q$ a non-degenerate quadratic form on $V$.
Let $\Gamma_Q$ be one of the connected subgraphs of ${N}O^\epsilon(V,Q)$ or ${N^\epsilon}O(V,Q)$
and suppose $\Gamma$ is a connected graph which is locally isomorphic to $\Gamma_Q$, i.e., for each vertex $v$ of $\Gamma$
the subgraph $\Gamma_v$ on $v$ and all neighbors of $v$ is isomorphic to $\Gamma_{Qv'}$ for some vertex $v'$ of $\Gamma_Q$. 

Let $\mathcal{P}$ be the point set of $\Gamma_Q$ and $\mathcal{P}_\Gamma$ the points of $\Gamma$.
Inside $\Gamma_Q$ we can also find the set $\mathcal{L}$ of tangent lines and $\mathcal{A}$ of tangent planes. 
We will recover similar lines and planes in $\Gamma$.  

The first lemma we prove is important, as it recovers the tangent lines only from local information (i.e., information on $\Gamma_v\simeq \Gamma_{Qv'}$ for some fixed $v\in \Gamma$ and $v'\in \Gamma_Q$).
We recall the following from the previous sections: for $v$ a vertex of $\Gamma_Q$, the geometry of tangent lines and tangent planes on $v$ is a non-degenerate polar space of rank at least $2$, which we denote by $\Pi_v$.

\begin{lemma} 
Fix a vertex $v$ of $\Gamma_Q$. Assume $v^\perp$ is of rank at least $2$.

Let $C$ be a clique of size $|C|\geq 5$ in $\Gamma_Q$ on $v$ such that each vertex $w\not \in C$ adjacent to 
$v$ is adjacent to $2$ or all vertices of $C$.
Then there is a tangent line  $T$ on $v$ with $T=C$.
\end{lemma}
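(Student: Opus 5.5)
Since $C$ is a clique containing $v$ with $|C|\geq 5$, every $c\in C\setminus\{v\}$ is adjacent to $v$ and therefore lies on a tangent line $\langle v,c\rangle$ through $v$. Such a line corresponds to an isotropic point $x_c\in v^\perp$, as in \cref{polar}. The plan is first to show that all the $x_c$ coincide, and then to show that $C$ exhausts the anisotropic points of that tangent line.

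\emph{Step 1: all $x_c$ are equal.} Suppose instead that $c,d\in C\setminus\{v\}$ lie on different tangent lines $\ell\neq m$ through $v$, with $x_\ell\neq x_m$.
\begin{itemize}
\item[(a)] First, $c\sim d$ forces $x_\ell\perp x_m$, so $\ell$ and $m$ span a tangent plane $A\in\mathcal{A}_v$. Indeed, if $x_\ell\not\perp x_m$, the $3$-space $\langle v,\ell,m\rangle$ is non-degenerate, and a direct check shows that $c$ is collinear with only two points of $m$, namely $v$ and one further point. One has to make sure that the chosen $d$ is not that point; if it were, we would replace $d$ by using a third element of $C$.
\item[(b)] The polar space $\Pi_v$ is non-degenerate of rank at least $2$. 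So there is an isotropic point $y\in v^\perp$ with $y\perp x_\ell$ and $y\not\perp x_m$.
\item[(c)] Choose a vertex $w\neq v$ on the tangent line $\langle v,y\rangle$. Such a $w$ is adjacent to all points of $\ell$ (since $\langle v,x_\ell,y\rangle$ gives a tangent plane) and to exactly two points of $m$, namely $v$ and one more; this is the argument of \cref{hyperbolic}. Since $q>3$, there are $q-1\geq 3$ choices for $w$. The extra neighbour on $m$ moves as $w$ varies along the line, so we can pick $w$ not adjacent to $d$.
\item[(d)] This $w$ is adjacent to $v$ and to $c$ but not to $d$. By hypothesis $w$ must then be adjacent to exactly two vertices of $C$, so its neighbours in $C$ are exactly $v$ and $c$.
\item[(e)] Now use $|C|\geq 5$. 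Every point of $C\cap\ell$ is adjacent to $w$, so $C\cap\ell\subseteq\{v,c\}$. The remaining points of $C$ (at least two, say $e,e'$) lie on other lines through $v$ inside $A$, or possibly outside $A$. We vary $y$ and $w$ to produce a neighbour of $v$ that is adjacent to three or more points of $C$ but not to all of them, which contradicts the hypothesis. One choice is to swap the roles of $\ell$ and the line through $e$. Another is to choose $y\perp x_\ell, x_{e}$ with $y\not\perp x_m$.
\end{itemize}
Step (e) is the main obstacle. It needs a careful count inside the affine plane $A$: points in $A$ adjacent to $w$ form the union of two parallel lines, as in \cref{hyperbolic}. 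Working out this count should force all of $C$ onto one line through $v$. I expect to split into cases according to whether $C\subseteq A$, and to use the rank-$2$ assumption to supply suitable $y$ in each case.

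\emph{Step 2: $C$ is the whole tangent line.} By Step 1, $C\subseteq T$, where $T$ is the tangent line through $v$ with missing point $x$. Suppose some point $t\in T\setminus C$. Then $t$ is adjacent to $v$ and lies outside $C$, and it is adjacent to every point of $C$ because they lie on the common line $T$. Since $|C|\geq 5>2$, this is consistent with the hypothesis, so the hypothesis alone gives no contradiction here. Instead, the intended reading of the lemma is that $C$ is a maximal clique. If that is not assumed, I would instead show that $T$ itself satisfies the hypothesis by \cref{polar}, and conclude $C=T$ by maximality. Here I would add maximality of $C$ explicitly, or note that the lemma is applied to maximal cliques.
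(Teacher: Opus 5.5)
There is a genuine gap, in two places. The more serious one is Step 2, which is simply wrong. The hypothesis \emph{does} exclude $C\subseteq T$ with $C\neq T$, so no maximality assumption is needed; adding one means you prove a weaker lemma. You only tested the vertex $t\in T\setminus C$ itself; test the other neighbours of $v$ instead. Let $x$ be the isotropic point of $T$ and pick an isotropic $y\in v^\perp$ with $y\not\perp x$. In the non-degenerate $3$-space $\langle v,x,y\rangle$, every vertex $w\neq v$ of $\langle v,y\rangle$ is adjacent to exactly two points of $T$: $v$ and one further point. That further point runs through all of $T\setminus\{v\}$ as $w$ varies. Choose $w$ whose second neighbour on $T$ is $t$. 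Then $w\notin C$, $w$ is adjacent to $v$, and its only neighbour in $C$ is $v$. This violates the ``$2$ or all'' condition. This is exactly how the paper disposes of the case $C\subseteq T$.

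Second, Step 1 is not yet a proof. Claim (a) is false as stated. Adjacent $c,d\in C$ can lie on tangents with $x_\ell\not\perp x_m$; this is the hyperbolic case, which the paper handles separately. ``Replacing $d$ by a third element'' is not justified. Step (e), the heart of the matter, is only a plan.

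The missing idea is to use the ``all'' alternative of the hypothesis rather than the ``$2$'' alternative. Suppose $u,w\in C\setminus\{v\}$ lie on distinct tangents through $v$. Every vertex $\neq v$ on a tangent $S$ through $v$ with $S\perp T_{v,u}$ and $S\perp T_{v,w}$ in $\Pi_v$ is adjacent to the three vertices $v,u,w$ of $C$, hence to all of $C$. So each $y\in C$ is adjacent to all points of every such $S$, and \cref{polar} then forces $T_{v,y}\perp S$, i.e.\ $T_{v,y}\in\{T_{v,u},T_{v,w}\}^{\perp\perp}$. Two cases follow.
\begin{itemize}
\item If $T_{v,u}\perp T_{v,w}$, then $C$ lies in a single tangent plane. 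Take two vertices from \cref{hyperbolic} that see three non-collinear points of $C$, but whose neighbourhoods in that plane are pairs of parallel lines from different parallel classes. Together they confine $C$ to at most $4$ points.
\item If $T_{v,u}\not\perp T_{v,w}$, then $C\subseteq T_{v,u}\cup T_{v,w}$. Take a vertex adjacent to all of $T_{v,w}$ but, on $T_{v,u}$, only to $v$ and $u$. It sees $v,u,w$, hence all of $C$, so $C\cap T_{v,u}\subseteq\{v,u\}$. Symmetrically $C\cap T_{v,w}\subseteq\{v,w\}$, so $|C|\le 3$.
\end{itemize}
Both cases contradict $|C|\ge 5$.
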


\begin{proof}
Let $C$ be a clique of $\Gamma_Q$ containing a vertex $v$ as in the hypothesis.
For $u$ a point of $\Gamma_Q$ adjacent to but different from $v$,  denote by $T_{v,u}$ the tangent line on $v$ and $u$.

If $C$ is contained in a tangent line, then it equals this tangent line.
Indeed, for any two points of the tangent there is a vertex $x$ adjacent to $v$ which is collinear with 
only these two points and we find  $C$ to be  the tangent line.

If there is a tangent line contained in $C$, then $C$ is equal to this tangent.
Indeed, if $T$ is this tangent and $T\cup \{w\} \subseteq C$ for some point $w$ not in $T$,
then $T$ and $T_{v,w}$ are two points of the polar space induced on the tangents through $v$.
We can find a tangent $T'$ on $v$ with $T$ and $T'$ collinear but $T$ and $T_{v,w}$ not.
But then $T_{v,w}$ contains a point from $\Gamma_v$ which is adjacent to the points of $T$ but not to $w$.

So, suppose $C$ is not contained in a tangent line nor contains a tangent.

Fix now $u,w\in C$ different from $v$ which are on different tangents on $v$ in $\Pi_v$.
Then each vertex $x\neq v$ of a  tangent $S$ on $v$, which is in an affine plane with  $T_{v,u}$ and with $T_{v,w}$,
is adjacent to at least  three points of $C$, the points $v,u,w$, and thus to all $q$ points of $C$.
In particular, for any $y\in C$ different from $v$ we find $T_{v,y}$ to be collinear to $T_{v,x}$ and hence $y$ to be adjacent 
all points of each such  $S$.
Moreover,  $T_{v,y}$ is contained in $\{T_{v,u}, T_{v,w}\}^{\perp\perp}$ inside the polar space $\Pi_v$ on the tangents on $v$.

If  $T_{v,u}\perp T_{v,w}$, then we find each such $T_{v,y}$ to be contained in the polar line on $T_{v,u}$ and $T_{v,w}$
of $\Pi_v$ which carries the structure of an affine plane. 
In particular, $C$ is contained in the affine plane on $v$ generated by $T_{v,u}$ and $T_{v,w}$.
This affine plane contains three non-collinear points inside $C$, say $v=t_1$, $t_2$ and $t_3$.
Then fix a point $x$ adjacent to these three points but not all of the plane. 
Such point does exist in the graph $\Gamma_Q$, see \cref{hyperbolic}.
Then as all points of $C$ are collinear to $x$, the clique $C$ is contained in the two parallel lines collinear to $x$.
Taking a different point $x'$ which is also adjacent to the three $t_1$, $t_2$ and $t_3$, but with 
the points of the affine plane adjacent to $x'$ being in two parallel lines from a different parallel classes, again see \cref{hyperbolic},
shows that $C$ can only have size smaller than $5$, leading to a contradiction.
Hence $C$ equals $S$ for some tangent line $S$ of $\Gamma$ on $v$, in the case that  $T_{v,u}\perp T_{v,w}$.

Now consider the case that $T_{v,u}\not \perp T_{v,w}$. Then we find $T_{v,y}$ to be contained in the hyperbolic line on $T_{v,u}$ and $T_{v,w}$
of $\Pi_v$ which just contains the two tangent lines $T_{v,u}$ and $T_{v,w}$. So $C\subseteq T_{v,u}\cup T_{v,w}$.

Since $C$ meets both $T_{v,u}$ and $T_{v,w}$ in at least two points, and $C$ has at least $5$ points, we can assume that
it meets $T_{v,u}$ or $T_{v,w}$ in at least $3$ points.
Take a vertex $x$ with $x$ adjacent to $v$ and  all vertices of $T_{v,u}$ but only to two points of $T_{v,w}$.
Then $x$ is adjacent to at least $3$ and thus all points of $C$. So, $C$ meets $T_{v,w}$ in just the two points $v$ and $w$.
Now picking $x$ to be a point adjacent to $v,w$ and $u$, and all points of $T_{v,w}$ we find that  $C$ meets $T_{v,u}$ in just the two points $v$ and $u$.
But then $C$ is contained in $\{v,w,u\}$ and $|C|\leq 3$ contradicting $|C|\geq 5$.
This contradiction implies that in this case $C$ is a tangent line.

As we only used neighbors of $v$ in the above arguments, this implies the lemma.
\end{proof}

\begin{remark}
The above lemma is the only place in this paper where the case with $q=3$ does not hold.
The case that $q=3$ fails in the above lemma, as there are cliques of size $3$ in $\Gamma_Q$ which are not inside a tangent line.
In case $(V,Q)$ is a quadratic space over $\mathbb{F}_3$, there is a local characterization by Pasechnik \cite{pasechnik} of the graphs 
on anisotropic points where two points are adjacent if and only if they span a hyperbolic line. 
\end{remark}

%\bigskip

As the above result is only proven by arguing with points collinear to $v$, 
we find that $\Gamma$ has the same set of tangent lines and planes as the graph $\Gamma_Q$ at this point $v$.
That also implies that  $\Gamma$ has a set of lines, which behave like the tangent lines of $\Gamma_Q$ at each point $v$.
Indeed, for each point $v$ of $\Gamma$ we find a line being a clique of size $q$ on $v$ such that any point which is adjacent to $v$ is adjacent to
$2$ or all points of that clique.
Denote this set of lines  by $\mathcal{L}_{\Gamma v}$. Now by $\mathcal{L}_\Gamma$ we denote the union of all cliques of size $q$ in $\Gamma$ which are inside some $\Gamma_v$ where $v$ is a vertex of $\Gamma$. 

\begin{lemma}
The geometry $(\mathcal{P}_\Gamma,\mathcal{L}_\Gamma)$ is a partial linear space.
\end{lemma}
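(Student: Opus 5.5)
We have to show two things: every line of $\mathcal{L}_\Gamma$ contains at least two points, and any two distinct points of $\Gamma$ lie on at most one line of $\mathcal{L}_\Gamma$. The first is immediate, since each member of $\mathcal{L}_\Gamma$ is a clique of size $q\geq 5$. The whole argument will be local: we transfer everything into a single neighbourhood $\Gamma_v$ and work there via the isomorphism $\Gamma_v\simeq \Gamma_{Qv'}$. The key input is the preceding lemma. A clique $C$ of size at least $5$ on $v$ inside $\Gamma_v$ with the "$2$ or all" property is exactly a tangent line of $\Gamma_{Qv'}$ on $v'$.

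\textbf{Step 1.} We first show that the notion of line does not depend on which neighbourhood it is seen from. Let $L\in\mathcal{L}_\Gamma$, so $L$ is a clique of size $q$ lying in some $\Gamma_w$ that arises as a line in $\mathcal{L}_{\Gamma w}$, with $w\in L$. We want $L\in \mathcal{L}_{\Gamma u}$ for every $u\in L$. Take $u\in L$, $u\neq w$. Both $L$ and the neighbourhood of $u$ can be examined inside $\Gamma_w\simeq\Gamma_{Qw'}$, where $L$ is a tangent line $T$ on $w'$, and $u$ corresponds to a point $u'$ of $T$.

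In $\Gamma_Q$ a tangent line is a tangent line at each of its points. So by \cref{polar} applied at $u'$, every vertex adjacent to $u'$ is adjacent to $2$ or all points of $T$.

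\textbf{Step 2.} The difficulty is that $\Gamma_u$ need not lie inside $\Gamma_w$. To verify the "$2$ or all" condition for $L$ at $u$, we therefore pass to $\Gamma_u\simeq\Gamma_{Qu''}$, where $L$ is again a clique of size $q$ on $u$.

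Rather than checking the condition directly, we apply the preceding lemma to the image of $L$ in $\Gamma_{Qu''}$, which requires the "$2$ or all" property there. Suppose a vertex $x\in\Gamma_u$ is adjacent to at least three points of $L$. Then $x$ is adjacent to a point of $L\setminus\{u\}$. I expect this to be the main obstacle: one must show $x$ is adjacent to all of $L$ using only neighbourhoods that contain $x$ together with enough of $L$.

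I would first try to handle it as follows. Pick $y\in L$, $y\neq u$, adjacent to $x$. Inside $\Gamma_y$, which contains $x$, $u$ and all of $L$ because $L$ is a clique, apply the lemma at $y$. This is possible provided $L$ is already known to be a line in $\mathcal{L}_{\Gamma y}$. That in turn follows if $y=w$, or by a connectivity/induction argument along $L$ starting from $w$: the neighbourhood $\Gamma_w$ contains $L$, and every point of $L$ is adjacent to $w$. If this is awkward, the fallback is the following. For $x$ adjacent to $w$, decide adjacency inside $\Gamma_w$. For $x$ not adjacent to $w$, use that $x$ is adjacent to at least two points $y,y'$ of $L\setminus\{w\}$, and derive a contradiction or the conclusion by computing in $\Gamma_y$. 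There the configuration $\{w,y,y',u,x\}$ is visible, and \cref{polar} in $\Gamma_{Q}$ forces adjacency to $2$ or all points.

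\textbf{Step 3.} Uniqueness. Let $L,M\in\mathcal{L}_\Gamma$ both contain distinct points $a,b$. By Step 1, $L,M\in\mathcal{L}_{\Gamma a}$, so both lie in $\Gamma_a\simeq\Gamma_{Qa'}$ and correspond there to tangent lines on $a'$ through the image of $b$. Two points of $\mathbb{P}(V)$ span a unique projective line, so the tangent line through $a'$ and $b'$ is unique. Hence $L=M$, and $(\mathcal{P}_\Gamma,\mathcal{L}_\Gamma)$ is a partial linear space.
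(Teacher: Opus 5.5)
There is a genuine gap in Step 2, and Step 2 carries the whole content of the lemma. The paper's proof is devoted entirely to your Step 1: showing that the line $T_{v,w}\in\mathcal{L}_{\Gamma v}$ is also the line $T_{w,v}\in\mathcal{L}_{\Gamma w}$. Both of your routes through it are circular.

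\emph{First route.} Using the ``$2$ or all'' property at some $y\in L\setminus\{w\}$ requires $L\in\mathcal{L}_{\Gamma y}$. That is the very statement you are proving, with $y$ in place of $u$. No induction along $L$ is available, because every point of $L\setminus\{w\}$ stands in the same relation to $w$ as $u$ does, and $w$ is the only anchored point.

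\emph{Fallback.} It has the same defect. In $\Gamma_y\simeq\Gamma_{Qy''}$ the image of $L$ is a priori only some $q$-clique through $y''$ and $w''$. Proposition~\ref{polar} is a statement about tangent lines, so invoking it there presupposes that this image is a tangent line.

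\emph{Missing case.} You only treat vertices adjacent to at least three points of $L$. A vertex adjacent to $u$ and to no other point of $L$ must also be ruled out for the ``$2$ or all'' condition.

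\emph{The missing ingredient} is a description of $L$ that does not refer to a base point. The paper uses $L=\bigcap_{x\in L^{\perp}}x^{\perp}$, where $L^{\perp}$ is the set of vertices adjacent or equal to all points of $L$. Every such vertex is adjacent to every point of $L$, so this set and the intersection can be computed inside $\Gamma_u$ for any $u\in L$. The description is therefore the same whether seen from $w$ or from $u$.

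\emph{A repair closer to your set-up} also works: work directly in $\Gamma_u\simeq\Gamma_{Qu''}$ and show that the image of $L$ \emph{is} the tangent line on $u''$ and $w''$.
\begin{itemize}
\item From $\Gamma_w$ and Proposition~\ref{polar} at $w'$, every common neighbour of $u$ and $w$ outside $L$ is adjacent to $2$ or all points of $L$.
\item The witnesses needed in the proof of the preceding lemma can all be taken to be common neighbours of $u''$ and $w''$. These are the points on tangents collinear in $\Pi_{u''}$ with both $T_{u'',w''}$ and a second tangent $T_{u'',t}$, and the points from Corollary~\ref{hyperbolic} for two pairs of parallel lines covering $u''$, $w''$ and a third point of the clique.
\item So that argument goes through and gives $L\in\mathcal{L}_{\Gamma u}$.
\item The ``$2$ or all'' property at $u$ then follows from Proposition~\ref{polar} at $u''$, and you never need to examine vertices non-adjacent to $w$ directly.
\end{itemize}
With Step 1 established in either way, your Step 3 is fine.
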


\begin{proof}
Let $\ell$ be a line in $\mathcal{L}_{\Gamma,v}$ and suppose $w$ is a point in $\ell$ different from $v$.
We have to show that the line $\ell$ is the unique  line of $\Gamma_w$ containing $v$.
The line $\ell$ equals $T_{v,w}$, we have to show that it also equals $T_{w,v}$.

The line $T_{v,w}$ is defined as being a clique $C$ of size at least $5$ in $\Gamma$ containing the points $v$ and $w$ such that each neighbor $x\not \in C$ of $v$
is adjacent to   $2$ or to all points of $C$.
Inside the polar space of the tangents on $v$ in $\Gamma_{Q v}$, we can also deduce that it is the intersection of the 
sets consisting of all $y$ adjacent to $x$, where $x$ runs through the points
adjacent to all vertices of $C$.
%Indeed, $$C=\bigcap_{x\in C^\perp} x^\perp.$$
However, this latter definition does not depend on the points $v$ and $w$.

So, $C$ can also be defined in this way. In particular, we find $T_{v,w}=T_{w,v}$.
\end{proof}

We can also define affine planes on $\Gamma$. 
Let $\ell,m$ be tangent lines in $\Gamma_v$ on $v$ which are inside a clique of $\Gamma$.
Then these two lines are collinear in the polar space on the lines of $\Gamma$ through $v$.
They determine a unique polar line on the points $\ell$ and $m$ with $q+1$ points inside $\Gamma_v$, which we denote by $A_{v,\ell,m}$. 

\begin{lemma}\label{planes}
Let $\ell$ and $m$ be two lines in $\mathcal{L}_{\Gamma,v}$ on $v$ which are inside a clique $A_{v,\ell,m}$ on  $q+1$ lines on $v$. 
Then the  $1+(q+1)(q-1)=q^2$ points inside $A_{v,\ell,m}$  form an affine plane in $(\mathcal{P}_\Gamma,\mathcal{L}_\Gamma)$.
\end{lemma}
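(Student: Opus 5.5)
The plan is to work entirely inside $\Gamma_v$, which by hypothesis is isomorphic to $\Gamma_{Qv'}$ for some vertex $v'$ of $\Gamma_Q$. By the first lemma of this section, the lines of $\mathcal{L}_{\Gamma,v}$ on $v$ correspond under this isomorphism to the tangent lines of $\Gamma_Q$ on $v'$, since both are characterized purely by adjacency inside $\Gamma_v$. So the polar space of lines on $v$ in $\Gamma$ is identified with $\Pi_{v'}$. The two lines $\ell,m$ lie in a common clique, hence are collinear points of $\Pi_{v'}$. By \cref{polar}, the polar line through them corresponds to a tangent plane $A'\in\mathcal{A}$ on $v'$. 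Consequently the $1+(q+1)(q-1)=q^2$ points of $A_{v,\ell,m}$ map bijectively onto the points of $A'$, and $A_{v,\ell,m}$ is a clique, since $A'$ is a clique in $\Gamma_Q$.

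Next I show that the lines of $(\mathcal{P}_\Gamma,\mathcal{L}_\Gamma)$ contained in $A_{v,\ell,m}$ are exactly the images of the tangent lines of $A'$. Lines through $v$ are handled by the identification above. For a line not through $v$, take two points $x,y\in A_{v,\ell,m}\setminus\{v\}$. Both lie in $\Gamma_v$, so I claim the line of $\mathcal{L}_\Gamma$ through $x,y$ is determined inside $\Gamma_v$. I would use the characterization from the proof of the partial linear space lemma: the line $T_{x,y}$ equals $\bigcap_{z} z^{\perp}$, where $z$ runs through the points adjacent to all of $T_{x,y}$.

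The main obstacle is that this description involves vertices $z$ that may lie outside $\Gamma_v$. My first attempt is to replace it by a local one. In $\Gamma_Q$, the tangent line through $x',y'$ in $A'$ is a clique of size $q$, and every vertex of $A'$ together with $v'$ lies in $\Gamma_{Qv'}$. So I would apply the first lemma at the vertex $x$ instead of $v$: $\Gamma_x\cong\Gamma_{Qx''}$, and $T_{x,y}$ is the unique clique of size at least $5$ on $x$ satisfying the "2 or all" condition. I then need to check that the image of the tangent line $x'y'$ of $A'$ is such a clique in $\Gamma_x$. Its points are adjacent to $v$ and to $x$. Every neighbour $w$ of $x$ that is also adjacent to $v$ sees $2$ or all of its points by the isomorphism $\Gamma_v\cong\Gamma_{Qv'}$ and \cref{polar}. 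For neighbours of $x$ not adjacent to $v$, I would argue by counting: $T_{x,y}$ is a line of size $q$ on $x$ through $y$, and it is contained in $\Gamma_v$. This holds because $v$ is adjacent to $x$ and $y$, so by the "2 or all" property at $x$, if $v\notin T_{x,y}$ then $v$ sees either exactly $x,y$ or all of $T_{x,y}$. In the first case I use a further point $y_2$ of the tangent $x'y'$: since $T_{x,y}=T_{x,y_2}$ would follow once $y_2\in T_{x,y}$, I compare common neighbours inside $\Gamma_v$ to force $T_{x,y}$ to coincide with the image line. Concretely, the image line is the intersection of the neighbourhoods of the points of $A_{v,\ell,m}$ that are adjacent to two parallel lines, as in \cref{hyperbolic}, and this intersection is visible in $\Gamma_v$.

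Once the lines inside $A_{v,\ell,m}$ are shown to be exactly the images of the $q^2+q$ tangent lines of $A'$, the incidence structure is that of the affine plane $A'$. Hence $A_{v,\ell,m}$, with the lines of $\mathcal{L}_\Gamma$ it contains, is an affine plane of order $q$.
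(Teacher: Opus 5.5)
Your first step is correct and matches the paper. Via $\Gamma_v\cong\Gamma_{Qv'}$, the set $\pi=A_{v,\ell,m}$ is a clique of $q^2$ points with an affine-plane structure $\pi_v$ as seen from $v$, and its lines through $v$ lie in $\mathcal{L}_{\Gamma v}$.

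The real content of the lemma is the next step, and this is where your proposal has a genuine gap. You must show that a line $L$ of $\pi_v$ through $x\neq v$ with $v\notin L$ equals $T_{x,y}$ for $y\in L\setminus\{x\}$. That line is defined by $\Gamma_x$, not by $\Gamma_v$.

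To apply the first lemma at $x$, you need every neighbour of $x$ \emph{not} adjacent to $v$ to see $2$ or all points of $L$. You announce a counting argument for these vertices but never give it. Instead you switch to claiming $T_{x,y}\subseteq\Gamma_v$. The 2-or-all property at $x$ only gives the dichotomy ``$v$ sees exactly $x,y$, or all of $T_{x,y}$''. The first alternative is the whole difficulty, and it is not excluded:
\begin{enumerate}
\item Your remedy via $y_2$ is circular, since $y_2\in T_{x,y}$ is exactly what has to be proved.
\item Comparing common neighbours inside $\Gamma_v$ cannot decide it. In that case the $q-2$ points of $T_{x,y}\setminus\{x,y\}$ lie outside $\Gamma_v$, so nothing visible in $\Gamma_v$ alone detects them.
\item Your closing description of $L$ through \cref{hyperbolic} only re-describes $L$ inside $\Gamma_v$, which was never in question. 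It gives no link to $T_{x,y}$.
\end{enumerate}
Even in the second alternative you would still need the 2-or-all property of $T_{x,y}$ to identify it with $L$. Being a $q$-clique through $x,y$ inside $\Gamma_v$ does not single out $L$: any $q$ points of $\pi$ containing $x,y$ form such a clique.

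What is missing is a way to use the local structure at $x$, not only at $v$. The paper does this by moving each vertex of $\Gamma_x\setminus\Gamma_v$ back into $\Gamma_v$ along a line at $x$. Suppose $y\sim x$ but $y\not\sim v$. Then $v\notin T_{x,y}$, so by the 2-or-all property at $x$ the vertex $v$ is adjacent to exactly one further point $y'$ of $T_{x,y}$. Since $y'\in\Gamma_v$, its trace $y'^{\perp}\cap\pi$ can be read off in $\Gamma_v$; typically it is a $\pi_v$-line on $v$ together with the parallel $\pi_v$-line on $x$, cf.\ \cref{hyperbolic}. The paper combines this with $y^{\perp}$, and with $k=T_{v,x}=T_{x,v}$ from the partial-linear-space lemma, to recognise the $\pi_v$-lines on $x$ as lines of $\mathcal{L}_{\Gamma x}$. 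Varying $x$ over $\pi$ then yields the affine plane.
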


\begin{proof}
Let $\pi=A_{v,\ell,m}$ be the set of $q^2$ points. This set does have the structure of an affine plane on $v$.
Indeed, a point $x$ adjacent to $v$ is adjacent to all or $2q$ points in $\pi$ which are on two non-intersecting tangent lines of the plane,
one tangent line on $v$ and one not on $v$ but parallel.
By varying the point $x$ over the points that have $2q$ neighbors in $\pi$, we find that, seen from $v$, the lines induced by these points provide the structure of an affine plane on the $q^2$ points.
We indicate this structure by $\pi_v$.

If $w\in \pi$ is a point different from $v$, then $w$ also determines an affine structure on $\pi$,
which we will show to be the same as the one induced by $v$.

Clearly, for a point $u$ adjacent  to both $v$ and $w$, the set  $u^\perp$ intersects $\pi$ in at least a line on $v$ and the parallel line on $w$ within the structure of 
the affine plane $\pi_v$.
By only considering $u$ such that $u^\perp$ meets the line $k=T_{v,w}=T_{w,v}$ on $v$ and $w$ just in these two points, this defines all lines in $\pi_v$ on $w$.
These lines are all inside some $u^\perp\cap \pi$ but are not containing a point of a line of $\pi_v$ through $v$ inside this set.
We will show that each such line is also a line in $\mathcal{L}_{\Gamma w}$.

Each point $y$ which is adjacent to $w$ has $y^\perp$ meeting $k$ in two or all points.
If we fix a $y$ which is adjacent to both $v$ and $w$, we find that $y^\perp$ contains (at least) two lines
of $\pi_v$, one on $v$ and one on $w$, as above.

If $y$ is not adjacent to $v$, but to $w$, then the line in $\mathcal{L}_{\Gamma w}$ on $w$ and $y$ contains a point $y'$ adjacent to $v$.
Notice that $y'^\perp$ meets $\pi$ in a clique on $w$ which is, as we saw above, the union of two  lines in $\pi_v$. One on $v$ and one on $w$.
In particular, $y^\perp \cap y'^\perp\cap \pi$ is the clique on $q$ points on $w$ inside $\pi$ which is a line on $w$ of $\pi_v$.
Varying $y$ we find all lines on $w$ in $\pi_v$ to be lines of $\mathcal{L}_{\Gamma w}$, and varying 
$w$ we find $\pi$ to be the affine plane $\pi_v$. By connectedness of $\Gamma$ we find all such planes to be affine.
\end{proof}

Denote by $\mathcal{A}_\Gamma$ the set of all these affine planes we obtain by the above \cref{planes}.
Then the geometry $(\mathcal{P}_\Gamma,\mathcal{L}_\Gamma,\mathcal{A}_\Gamma)$ is locally a non-degenerate polar space of rank at least $2$.
But these geometries are characterized.
Indeed, we obtain the following result which can be found in \cite{cuypers} for the rank of the local polar space at a point equal to $2$ (in which case the geometry is finite) and 
in \cite{cuyperspasini} for the case that this rank is at least $3$.

\begin{theorem}
The geometry $(\mathcal{P}_\Gamma,\mathcal{L}_\Gamma,\mathcal{A}_\Gamma)$ is a  connected geometry with a set $\mathcal{A}_\Gamma$ of affine planes, such that for all points $p\in \mathcal{P}_\Gamma$ we have
$(\mathcal{L}_{\Gamma p},\mathcal{A}_{\Gamma p})$ a non-degenerate polar space of orthogonal type with lines of size $q+1$, where $q$ is odd, and rank at least $2$.

Moreover, as each point is collinear with $0$, $2$ or all points of a line, $(\mathcal{P}_\Gamma,\mathcal{L}_\Gamma,\mathcal{A}_\Gamma)$ is a standard quotient of an affine orthogonal polar space.
\end{theorem}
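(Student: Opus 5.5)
The plan has two parts. First I verify that $(\mathcal{P}_\Gamma,\mathcal{L}_\Gamma,\mathcal{A}_\Gamma)$ satisfies the axioms used in \cite{cuypers,cuyperspasini}. Then I invoke the relevant classification: \cite{cuypers} when the local polar rank is $2$, and \cite{cuyperspasini} when it is at least $3$.

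\emph{Connectedness.} Any two adjacent vertices $v,w$ of $\Gamma$ lie on the line $T_{v,w}\in\mathcal{L}_{\Gamma}$, by the first lemma of this section applied in $\Gamma_v\simeq\Gamma_{Qv'}$. Hence the collinearity graph of the geometry is exactly $\Gamma$, which is connected by hypothesis.

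\emph{Local structure.} Fix $p\in\mathcal{P}_\Gamma$ and an isomorphism $\Gamma_p\simeq\Gamma_{Qp'}$. The lines of $\mathcal{L}_{\Gamma p}$ were defined purely in terms of $\Gamma_p$, so this isomorphism maps them bijectively onto the tangent lines of $\Gamma_Q$ on $p'$. By the partial linear space lemma, these are the same as the lines of $\mathcal{L}_\Gamma$ through $p$. By \cref{planes}, the planes $A_{p,\ell,m}$ are likewise defined inside $\Gamma_p$, so they correspond to the tangent planes on $p'$. Now \cref{polar} identifies $(\mathcal{L}_{\Gamma p},\mathcal{A}_{\Gamma p})$ with the polar space of $p'^{\perp}$. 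This is non-degenerate, of orthogonal type over $\mathbb{F}_q$ with $q$ odd, so its lines have $q+1$ points, and by hypothesis its rank is at least $2$.

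\emph{Point--line relation.} Let $x$ be a point and $\ell\in\mathcal{L}_\Gamma$ a line. If $x$ is adjacent to no point of $\ell$, it meets $\ell$ in $0$ points. Otherwise pick $v\in\ell$ with $x\sim v$. Inside $\Gamma_v\simeq\Gamma_{Qv'}$, the line $\ell$ is a tangent line on $v'$, so the second part of \cref{polar} shows that $x$ is adjacent to $2$ or all points of $\ell$. If $x\in\ell$, then $x$ is trivially collinear with all points of $\ell$. Thus every point is collinear with $0$, $2$ or all points of a line.

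\emph{Identification.} With these properties established, I apply the main theorems of \cite{cuypers} (rank $2$; the geometry is finite there) and \cite{cuyperspasini} (rank at least $3$). They state that a connected geometry with affine planes, whose residues are non-degenerate orthogonal polar spaces with lines of size $q+1$, and in which each point sees $0$, $2$ or all points of each line, is a standard quotient of an affine orthogonal polar space $AP(\hat V,V)$.

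\emph{Main obstacle.} The delicate point is checking that the hypotheses match those of the cited papers exactly. Two things need care. First, the planes in $\mathcal{A}_\Gamma$ must be genuine subspaces of the geometry, i.e. closed under taking lines through two of their points. This follows from \cref{planes}, which shows that the affine structure seen from any point of the plane is the same. Second, the local isomorphism type may a priori vary with $p$. It does not, because connectedness of $\Gamma$ together with the fixed target $\Gamma_Q$ forces a constant residue type. I would also note that the $0$/$2$/all condition excludes the alternative quotients treated in \cite{cuyperspasini}, so that only the standard quotient remains.
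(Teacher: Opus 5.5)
Your proposal is correct and takes essentially the paper's route. The paper likewise notes that the preceding lemmas make $(\mathcal{P}_\Gamma,\mathcal{L}_\Gamma,\mathcal{A}_\Gamma)$ locally a non-degenerate orthogonal polar space of rank at least $2$, takes the $0$/$2$/all property locally from \cref{polar}, and cites \cite{cuypers} (rank $2$) and \cite{cuyperspasini} (rank at least $3$); you just write out these verifications in more detail. One small slip: the residue type is constant because all vertices of $\Gamma_Q$ lie in one $P\Gamma O(V,Q)$-orbit, so all $p'^{\perp}$ are isometric, not because $\Gamma$ is connected.
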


So, to prove our main theorem, \cref{main}, it remains to check which standard quotients of affine orthogonal polar spaces
we have to deal with.
However, as inside $\Gamma$ each clique $C$, which is a tangent line, does satisfy the rule that each point $x$ is adjacent to $0,2$ or all points
of $C$, we do notice that $\Gamma$ is a standard quotient of the affine polar space obtained by calling two points $y$ and $z$
in relation if and only if $\langle y,z\rangle$ contains the point $r$ with  $r^\perp=H$.
The geometric hyperplane which we have removed is then  a non-degenerate geometric  hyperplane, as each point $p$ of the polar space outside $H$ is (in the quotient) collinear to $2$ points of a line $\ell$, which contains a point but not all in $p^\perp$, and the number of elements in a hyperbolic line of the local polar space $(\mathcal{L}_{\Gamma p},\mathcal{A}_{\Gamma p})$ is also $2$. See \cite{cuypers} and \cite{cuyperspasini}.
So, we do find that $\Gamma$ is the collinearity graph of this standard quotient of an affine orthogonal polar space obtained by removing a non-degenerate geometric hyperplane. The graph $\Gamma$ is isomorphic to $\Gamma_Q$. We have proven \cref{main}:

\begin{theorem}
Let $\Gamma$ be a connected graph locally isomorphic to the graph $\Gamma_Q$.
Assume that for each $v\in \Gamma_Q$ the rank of $v^\perp$ is at least $2$ and $2\not \mid q>3$.
Then $\Gamma$ is isomorphic to $\Gamma_Q$.
\end{theorem}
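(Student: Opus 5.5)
The proof will assemble the lemmas of this section into a chain; the one genuinely new ingredient is the identification of the quotient at the end. Let $\Gamma$ be connected and locally isomorphic to $\Gamma_Q$.

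First, fix a vertex $v$ of $\Gamma$ and an isomorphism $\Gamma_v\simeq\Gamma_{Qv'}$. The clique lemma (cliques $C$ of size at least $5$ on $v$ such that every neighbour of $v$ outside $C$ meets $C$ in $2$ or all points are exactly the tangent lines on $v$) is proved using only vertices of $\Gamma_{Qv'}$. This is where $q>3$ is used, since a line has $q\geq 5$ points. Hence the lemma transports to $\Gamma_v$ and defines the set $\mathcal{L}_{\Gamma v}$ intrinsically. The partial linear space lemma then gives $T_{v,w}=T_{w,v}$, so $\mathcal{L}_\Gamma$ is a well-defined line set. Next, \cref{planes} produces the affine planes $\mathcal{A}_\Gamma$, and these agree when viewed from any of their points.

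By \cref{polar}, transported through the local isomorphism, the residue $(\mathcal{L}_{\Gamma p},\mathcal{A}_{\Gamma p})$ at each point $p$ is isomorphic to the polar space of $p'^\perp$. This polar space is non-degenerate, orthogonal, of rank at least $2$ by hypothesis, and has lines of size $q+1$. Since $\Gamma$ is connected, the collinearity graph of $(\mathcal{P}_\Gamma,\mathcal{L}_\Gamma,\mathcal{A}_\Gamma)$ is connected. We may therefore apply the preceding theorem from \cite{cuypers,cuyperspasini}: the geometry is a standard quotient of an affine orthogonal polar space $AP(\hat V,H)$. Here we use that every point is collinear with $0$, $2$ or all points of a line; the $0$ case is off the local graph and the $2$/all case comes from \cref{polar}.

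The main obstacle is pinning down which quotient arises, so that the collinearity graph is exactly $\Gamma_Q$. For this I would argue in two steps.

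\begin{enumerate}
\item The removed hyperplane $H$ is non-degenerate. Take a point $p$ and a line $\ell$ not through $p$ that contains a point, but not all points, of $p^\perp$. Then $p$ is collinear with exactly $2$ points of $\ell$, matching the size $2$ of hyperbolic lines in the local polar space.
\item The quotient is the standard one, identifying $\hat p$ with the point of $\langle \hat p,r\rangle\cap H$, where $r^\perp=H$. Here I would use that the local polar space at a point is isomorphic to that of $\Gamma_Q$; this forces $H$, with its induced form, to be isometric to $(V,Q)$ up to similarity. Replacing $Q$ by a scalar multiple if necessary, the non-isotropic type of the component then matches, because the dimension and the type of $p^\perp$ are read off locally.
\end{enumerate}

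With this identification, the standard map carries the collinearity graph of the quotient onto a connected component of the tangent graph on $V$. This component has the same local type as $\Gamma_Q$, and for $n\ge 4$ the components of the same type are isomorphic (they are swapped by a similitude when $n$ is even). Hence $\Gamma\simeq\Gamma_Q$.
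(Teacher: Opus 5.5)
Your chain of reductions follows the paper's proof: the clique lemma, $T_{v,w}=T_{w,v}$, \cref{planes}, the characterization from \cite{cuypers,cuyperspasini}, and the same ``$2$ points versus hyperbolic lines of size $2$'' argument for non-degeneracy of the removed hyperplane. For odd $n=\dim V$ your identification in step (2) is also fine. For even $n=2m$, however, step (2) fails. Then $p^\perp$ is odd-dimensional. So if $Q^{+}$ and $Q^{-}$ are forms of $+$-type and $-$-type on $\mathbb{F}_q^{2m}$, the residue at every vertex of $\Gamma_{Q^{+}}$ and of $\Gamma_{Q^{-}}$ is the same parabolic polar space of a non-degenerate $(2m-1)$-space. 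Both graphs arise as standard quotients of $AP(\hat V,H)$ with $\dim\hat V=2m+1$ and $H=r^\perp$ of either type. Yet they are not isomorphic, since they have $\frac12 q^{m-1}(q^m\mp 1)$ vertices. Hence the residue does not force $H$ to be similar to $(V,Q)$, and the dimension and type of $p^\perp$ do not single out $\Gamma_Q$. The paper itself passes over this point, simply asserting that $\Gamma$ is isomorphic to $\Gamma_Q$. Still, the justification you supply is false as stated.

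What is missing is an invariant of the local graph that the residue does not see. Here is one way to get it.
Put $a=Q(v)$. The neighbours of $v$ are the points $\langle v+x\rangle$ with $x$ a non-zero isotropic vector of $v^\perp$, and $\langle v+x\rangle\sim\langle v+y\rangle$ if and only if $f(x,y)\in\{0,-4a\}$. Take three tangent lines $\langle v,x\rangle,\langle v,y\rangle,\langle v,z\rangle$ with $x,y,z$ isotropic and pairwise non-orthogonal. They carry a triangle avoiding $v$ if and only if $-a\,f(x,y)f(y,z)f(z,x)$ is a square. Both square classes of $f(x,y)f(y,z)f(z,x)$ occur: it is half the Gram determinant of the non-degenerate $3$-space $\langle x,y,z\rangle$, and $v^\perp$ contains such $3$-spaces of both discriminants. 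Now take an isomorphism from $\Gamma_v$ onto the local graph at a vertex $v'$ of $\Gamma_{Q'}$, with $\dim V'=\dim V$. It induces an isomorphism of residues, which is a semilinear similitude $v^\perp\to v'^\perp$ with some multiplier $\lambda$. Preserving triangles forces $a\equiv\lambda Q'(v')$ modulo squares. As $\dim v^\perp$ is odd, $\mathrm{disc}(v'^\perp)\equiv\lambda\,\mathrm{disc}(v^\perp)$. Hence $\mathrm{disc}(V')\equiv\mathrm{disc}(V)$, and $Q'$ has the same type as $Q$. Only with this does local isomorphism pick $\Gamma_Q$ out among the standard quotients.
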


\section{Automorphisms of $\Gamma_Q$}

Consider a  non-degenerate orthogonal space $(V,Q)$ over a field $\mathbb{F}_q$ with $q>3$ odd. 
In the previous section we proved that the graph $\Gamma_Q$ with point set $\mathcal{P}$ contains, under the restrictions that for all anisotropic  $v\in V$ the space $v^\perp$
has rank at least $2$, a unique collection $\mathcal{L}$ of lines of size $q$ as well as
a unique collection $\mathcal{A}$ of affine planes such that the geometry $(\mathcal{P},\mathcal{L},\mathcal{A})$
is a standard quotient of an affine polar space of rank $r\geq 2$.
It has been shown in \cite{cuypers} and \cite{cuyperspasini} that these quotients also carry the structure of the polar space of $(V,Q)$, with $V$ of dimension at least $5$.
Indeed, the maximal standard quotient of the affine polar space is an orthogonal polar space isomorphic to the polar space of $(V,Q)$.
Its points are the equivalence classes of extended parallel lines with as polar lines the equivalence classes of the lines in some affine plane.
This implies that the automorphism group of the graph $\Gamma_Q$ is contained in the automorphism group of this polar space.
Knowing the automorphism group of the polar space, see \cite{BC}, we obtain:

\begin{lemma}
Suppose for each $v\in \Gamma_Q$ the rank of $v^\perp$  is at least $2$ and $2\not\mid q>3$.
Then $\mathrm{Aut}(\Gamma_Q)$ is contained in $P\Gamma O(V,Q)$. 
\end{lemma}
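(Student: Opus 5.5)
The plan is to build an injective homomorphism $\mathrm{Aut}(\Gamma_Q)\to \mathrm{Aut}$ of the polar space of $(V,Q)$, and then use the known description of that group from \cite{BC}.

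The first step is to show that every automorphism $\varphi$ of $\Gamma_Q$ preserves $\mathcal{L}$ and $\mathcal{A}$. By the clique lemma of \cref{sect:tangent}, the tangent lines on a vertex $v$ are exactly the cliques $C$ on $v$ with $|C|\geq 5$ such that every neighbour of $v$ outside $C$ is adjacent to $2$ or to all vertices of $C$. This description is purely graph theoretic, so $\varphi$ maps $\mathcal{L}$ onto $\mathcal{L}$. Likewise, each affine plane $A_{v,\ell,m}$ is the set of points on the $q+1$ lines through $v$ forming a polar line of $\Pi_v$. Equivalently, it is the union of the lines on $v$ spanned by two intersecting lines of $\mathcal{L}$ that lie in a common clique. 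This is again graph theoretic, so $\varphi$ permutes $\mathcal{A}$. Hence $\varphi$ is an automorphism of the geometry $(\mathcal{P},\mathcal{L},\mathcal{A})$.

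The second step recovers the polar space of $(V,Q)$ from $(\mathcal{P},\mathcal{L},\mathcal{A})$, as described after \cref{polar} and in \cite{cuypers,cuyperspasini}. Two lines of $\mathcal{L}$ are parallel if they are disjoint lines of a common plane in $\mathcal{A}$. Take the transitive closure of this relation. Each class consists of the tangent lines through one fixed isotropic point of $\mathbb{P}(V)$, which is the missing point of those lines. Its polar lines are the sets of classes met by the lines of a plane in $\mathcal{A}$; these are the points of the isotropic line forming the radical of that tangent plane. The construction uses only $\mathcal{L}$ and $\mathcal{A}$, so $\varphi$ induces a permutation $\bar\varphi$ of the isotropic points that maps polar lines to polar lines. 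This gives a homomorphism $\varphi\mapsto\bar\varphi$ into the automorphism group of the polar space.

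The third step is injectivity. An anisotropic point $p$ is determined by the set of isotropic points on its tangent lines, since these span $p^\perp$ and $p=(p^\perp)^\perp$. Since $\varphi$ maps the lines on $p$ to the lines on $\varphi(p)$, we get $\bar\varphi(p^\perp\cap\text{isotropic})=\varphi(p)^\perp\cap\text{isotropic}$. So $\bar\varphi=1$ forces $\varphi=1$.

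Finally, the hypothesis on the rank gives $\dim V\geq 5$ and polar rank at least $2$. By \cite{BC}, every automorphism of the orthogonal polar space of $(V,Q)$ is then induced by an element of $P\Gamma O(V,Q)$. Composing this with the injection above places $\mathrm{Aut}(\Gamma_Q)$ inside $P\Gamma O(V,Q)$.

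I expect the main obstacle to be the second step: checking that the extended parallelism classes are exactly the pencils of tangents through isotropic points, and that the classes arising from planes give exactly the isotropic lines. I plan to rely on the standard-quotient identification from the previous section and \cite{cuypers,cuyperspasini} rather than reprove it.
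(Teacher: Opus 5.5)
Your proposal is correct and follows essentially the same route as the paper. Automorphisms of $\Gamma_Q$ preserve the graph-theoretically determined $\mathcal{L}$ and $\mathcal{A}$, so they act on the polar space of $(V,Q)$ recovered from the extended parallel classes as in \cite{cuypers,cuyperspasini}, and \cite{BC} then realizes this action inside $P\Gamma O(V,Q)$. Your explicit injectivity check, using that an anisotropic point $p$ is determined by the isotropic points of $p^\perp$, spells out a step the paper leaves implicit.
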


As the group $P\Gamma O(V,Q)$ does act as a group of automorphisms of the graph $\Gamma_Q$ we even obtain  the following:

\begin{theorem}
Suppose for each $v\in \Gamma_Q$ the rank of $v^\perp$  
is  at least $2$ and $2\not\mid q>3$.
Then the  automorphism group of the graph $\Gamma_Q$ is isomorphic to $P\Gamma O(V,Q)$. 
\end{theorem}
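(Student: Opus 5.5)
The plan is to prove two inclusions and combine them. The first is already available: the preceding lemma gives $\mathrm{Aut}(\Gamma_Q)\leq P\Gamma O(V,Q)$. The second is that $P\Gamma O(V,Q)$ acts faithfully on $\Gamma_Q$ by graph automorphisms. Together these give equality.

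For the action, take $g\in \Gamma O(V,Q)$, i.e.\ a semilinear bijection $g$ with $Q(gv)=\lambda\, Q(v)^\sigma$ for some $\lambda\in\mathbb{F}_q^*$ and field automorphism $\sigma$.
\begin{enumerate}
\item Such a $g$ maps isotropic points to isotropic points and anisotropic points to anisotropic points. It therefore maps tangent lines to tangent lines, since a tangent line is exactly a line containing a unique isotropic point. Hence $g$ induces an automorphism of the whole graph on anisotropic points.
\item The element $g$ permutes the connected components of this graph. For $n$ odd the two components $N^{\pm}O(V,Q)$ are distinguished by the isometry type of $p^\perp$, which $g$ preserves; so $g$ stabilises $\Gamma_Q$.
\item For $n$ even, $g$ might interchange the square and non-square components if $\lambda$ is a non-square. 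In that case I would note that these two components are isomorphic (as stated in \cref{sec:orthogonal}). I would then read $P\Gamma O(V,Q)$ as the group of similitudes modulo scalars, whose image is the stabiliser of the component. Equivalently, I would use the recovered polar-space structure: an automorphism of $\Gamma_Q$ induces a collineation of the polar space, and every collineation is induced by a semisimilitude \cite{BC}.
\end{enumerate}

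Faithfulness: suppose $g$ fixes every vertex of $\Gamma_Q$. Then $g$ fixes every tangent line, hence every isotropic point, since each isotropic point is the missing point of some tangent line in $\Gamma_Q$ (by \cref{polar}, every isotropic point of $p^\perp$ arises this way). A collineation fixing all points of a non-degenerate polar space of rank at least $2$ is trivial in $P\Gamma O(V,Q)$. So the kernel is trivial.

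Finally, I would check that the map $\mathrm{Aut}(\Gamma_Q)\to P\Gamma O(V,Q)$ from the preceding lemma is injective. An automorphism of $\Gamma_Q$ inducing the identity on the polar space fixes every parallel class of tangent lines. Each vertex is the intersection of two tangent lines lying in distinct parallel classes. Since a vertex lies on only one line of a given class, the automorphism fixes every vertex. The two maps are then mutually inverse on the relevant subgroup, which gives the isomorphism.

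The main obstacle is the even-dimensional component issue: pinning down exactly which subgroup of semisimilitudes stabilises $\Gamma_Q$ and matching it with $P\Gamma O(V,Q)$.
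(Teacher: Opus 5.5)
Your skeleton matches the paper's proof: the preceding lemma gives $\mathrm{Aut}(\Gamma_Q)\le P\Gamma O(V,Q)$, and the natural action gives the reverse inclusion. The paper says nothing beyond this. As written, however, your version has two genuine holes.

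\textbf{The even-dimensional case.} The problem you flag is real, but your proposed reading does not resolve it. The lemma lands in the collineation group of the polar space, which is the group of semisimilitudes modulo scalars. For $n$ even one has $\lambda Q\cong Q$ for every $\lambda\in\mathbb{F}_q^*$, since the discriminant changes by $\lambda^n$, which is a square. For example, for the $+$-type form $x_1x_2+x_3x_4+\cdots$, scale $x_1,x_3,\dots$ by $\lambda$. A non-square multiplier interchanges the square and non-square components. So the (semi)similitudes modulo scalars are \emph{not} the stabiliser of $\Gamma_Q$; that stabiliser $S$ has index $2$.

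What you must actually prove is $\mathrm{Aut}(\Gamma_Q)\cong S$. Here $S$ is the image of the semisimilitudes with square multiplier, so $S\cong \Gamma O^{\mathrm{iso}}/\{\pm1\}$, where $\Gamma O^{\mathrm{iso}}$ denotes the semi-isometries; to see this, write $g=\mu\cdot(\mu^{-1}g)$ when the multiplier is $\mu^2$. For $n$ odd every semisimilitude has square multiplier (again by discriminants, since field automorphisms preserve square classes), so $S$ is the whole group and there is no ambiguity. For $n$ even the statement is true precisely when $P\Gamma O(V,Q)$ is read as semi-isometries modulo $\pm1$. The paper's sentence that $P\Gamma O(V,Q)$ acts on $\Gamma_Q$ tacitly uses this reading. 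You also still need to show that the semisimilitude $g$ inducing a given $\phi\in\mathrm{Aut}(\Gamma_Q)$ on the polar space lies in $S$.

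\textbf{The injectivity step.} Your argument here is a non sequitur. Fixing every parallel class does not fix any individual tangent line. Moreover, two classes $x_1,x_2$ are incident with every vertex of $\langle x_1,x_2\rangle^\perp$, so they do not determine $p$.

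The missing observation is that the classes incident with a vertex $p$ are exactly the isotropic points of $p^\perp$. Indeed, for isotropic $x$, the line $\langle p,x\rangle$ is tangent iff $x\perp p$, as in \cref{polar}. These points span the non-degenerate space $p^\perp$, which has rank $\ge 2$. Hence $p=(p^\perp)^\perp$ is determined by its set of incident classes.

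Now suppose $\phi$ induces $g$. Then the classes at $\phi(p)$ are the points $g(x)$ for $x$ isotropic in $p^\perp$. These are exactly the isotropic points of $g(p^\perp)=g(p)^\perp$, so $\phi(p)=g(p)$ for every vertex $p$. This gives injectivity. It also forces $g(\Gamma_Q)=\Gamma_Q$, so $g\in S$ with square multiplier, and it makes your two maps mutually inverse. With this observation and the corrected target group $S$, your argument becomes complete.
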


This result is equivalent to \cref{aut}.

\begin{remark}
The condition that the rank of $v^\perp$ for a point $v\in V$ with $Q(v)\neq 0$ is at least $2$ is a necessary condition.
For example, in case the dimension of $V$ equals $3$, then we find that the automorphism group of a connected component
$N^\epsilon O(V,Q)$ to contain the  symmetric group $\mathrm{Sym}_{q+1}$ acting on the $\frac{(q+1)q}{2}$ points in case $\epsilon=+$ and
$\mathrm{Sym}_{q}$ acting  on  the $\frac{(q-1)q}{2}$ points in case  $\epsilon=-$. In both cases, the action of the symmetric group $\mathrm{Sym}_{m}$ is the action on the pairs of elements from $\{1,\dots, m\}$.
\end{remark}

\begin{remark}
The result \cref{aut} has also been obtained by our  student Maud Bokhoven  in her Bachelor Project  \cite{maud}.
\end{remark}

\newpage 

\bibliographystyle{abbrv}
\bibliography{ortho.bib}

\vspace{1cm}
\bigskip

\begin{minipage}{10cm}
Hans Cuypers\\
Department of Mathematics and Computer Science\\
Eindhoven University of Technology\\
P.O. Box 513, 5600 MB Eindhoven\\
The Netherlands\\
email: f.g.m.t.cuypers@tue.nl\\
\end{minipage}
\end{document}